\newtheorem{thm}{Theorem}[section]
\newtheorem{lemma}[thm]{Lemma}
\newtheorem{prop}[thm]{Proposition}
\theoremstyle{definition}
\newtheorem{defn}[thm]{Definition}
\theoremstyle{remark}
\newtheorem{oss}[thm]{Remark}
\numberwithin{equation}{section}
\newcommand{\N}{\mathbb{N}}
\newcommand{\Z}{\mathbb{Z}}
\newcommand{\Q}{\mathbb{Q}}
\newcommand{\R}{\mathbb{R}}
\newcommand{\T}{\mathbf{T}^3}
\newcommand{\bg}{\bigg}
\newcommand{\Axi}{A_{\xi}}
\newcommand{\xiA}{\xi \times \Axi}
\newcommand{\BW}{\mathbf{W}_{\xi}}
\newcommand{\BWpr}{\mathbf{W}_{\xi'}}
\newcommand{\et}{\eta_{\xi}}
\newcommand{\am}{a_{\xi}}
\newcommand{\gammxi}{\gamma_{\xi}(R)}
\newcommand{\AntiDiv}{\mathcal{R}}
\newcommand{\proj}{\mathbf{P}}
\newcommand{\LinfL}{L^{\infty}_tL^1_x}
\newcommand{\LinfLSec}{L^{\infty}_tL^2_x}
\newcommand{\Diss}{(-\Delta)^{\frac{5}{4},\beta}}
\DeclareMathOperator{\supp}{Supp}
\DeclareMathOperator{\Div}{div}
\DeclareMathOperator{\sign}{sgn}
\DeclareMathOperator{\Tr}{Tr}
\newcommand{\LlL}{\mathrm{L}(\mathrm{logL})}
\begin{document}
\title[]{Non-uniqueness of weak solutions for a logarithmically supercritical hyperdissipative Navier-Stokes system}
\author[M. Romito]{Marco Romito}
  \address{Dipartimento di Matematica, Universit\`a di Pisa, Largo Bruno Pontecorvo 5, I--56127 Pisa, Italia}
  \email{\href{mailto:marco.romito@unipi.it}{marco.romito@unipi.it}}
\author[F. Triggiano]{Francesco Triggiano}
  \address{Scuola Normale Superiore, Piazza dei Cavalieri, 7, 56126 Pisa, Italia}
  \email{\href{mailto:francesco.triggiano@sns.it}{francesco.triggiano@sns.it}}
\subjclass[2020]{Primary 35Q30; secondary 76D03, 76D05}
\keywords{Navier-Stokes equations, convex integration, non-uniqueness, slightly supercritical hyper-dissipation}
\date{June 11, 2024}
\begin{abstract}
The existence of non-unique solutions of finite kinetic energy for the three dimensional Navier-Stokes equations is proved in the slightly supercritical hyper-dissipative setting introduced by Tao \cite{T10}.
The result is based on the convex integration techniques of Buckmaster and Vicol \cite{BV19}, and extends Luo and Titi \cite{LT20} in the slightly supercritical setting. To be able to be closer to the threshold identified by Tao, we introduce the impulsed Beltrami flows, a variant of the intermittent Beltrami flows of Buckmaster and Vicol.
\end{abstract}
\maketitle

\section{Introduction}

In this work we consider the following hyperdissipative Navier-Stokes system on the 3-dimensional torus
\begin{equation}\label{MainEq}
  \begin{cases}
    \partial_tu+\Div(u\otimes u)+\nu (-\Delta)^{\theta,\beta}u+\nabla p=0,\\
    \Div(u)=0,
  \end{cases}  
\end{equation}
with periodic boundary conditions and zero spatial mean, where $\beta$ and $\theta$ are non-negative parameters, and the dissipative term is defined via Fourier transform $\mathcal{F}$ by
    \begin{equation*}
      \mathcal{F}[(-\Delta)^{\theta,\beta}u](k)=\mathbf{1}_{\{k\neq 0\}}(k)\frac{|k|^{2\theta}}{\log^{\beta}(10+|k|_1)}\mathcal{F}(u)(k),\qquad\text{for all } k \in \Z^3, 
    \end{equation*}
    where $|k|$ and $|k|_1$ are the 2-norm and the 1-norm of $k$, respectively.

 Well-posedness of a similar system has been widely studied in the last century. In particular, it is well known that global existence and uniqueness of sufficiently smooth solutions holds when $\beta=0$ and $\theta\geq\frac{5}{4}$ \cite{L69,KP02}.
 
On the critical threshold $\theta=\frac{5}{4}$, the same result has been proved by Tao \cite{T10} and Barbato et al. \cite{BMR15} when the dissipative term is slightly weakened by a Fourier multiplier that mimics a logarithmic behaviour raised to a sufficiently small power $\beta$.

When $\theta<\frac{5}{4}$ it is well known that there are various open problems, such as uniqueness of Leray-Hopf solutions for classical NSE, i.e. $\theta=1$ and $\beta=0$.

Furthermore, various negative results appeared during the last decade.
In particular, Luo et al. \cite{LT20} proved non-uniqueness of global distributional solutions, less regular than Leray weak solutions \cite{L34}, by means of the convex integration theory \cite{BV19,BV20,DS14,I18,MS18}.
The convex integration machinery has been used by Li et al. \cite{LQZZ22} in order to prove non-uniqueness in suitable functional spaces also above the Lions exponent, i.e. $\theta \geq \frac{5}{4}$ and $\beta=0$, but non-uniqueness is proved in the space $C_tL^p$ for all $1\le p<2$, thus in larger spaces than energy (in general below the Ladyzenskaja-Prodi-Serrin threshold).

In this paper we work at the critical threshold $\theta=\frac{5}{4}$, whose relevance can be also highlighted by making some scaling considerations. See \cite{K10} for a more complete discussion on the topic.
Under the assumption $\beta=0$, 
we recall that a functional space $\mathbf{X}$ is said to be a critical space if $\|u\|_{\mathbf{X}}$\ is invariant under the following scaling transformation:
\begin{equation*}
    u(t,x)\to \lambda^{2\theta-1}u(\lambda^{2\theta}t,\lambda x),\qquad
    p(t,x)\to \lambda^{4\theta-2}p(\lambda^{2\theta}t,\lambda x).
\end{equation*}
Since its norm represents the kinetic energy, one important functional space is $\mathbf{X}=C_tL^2_x$, which is a critical space only if $\theta=\frac{5}{4}$.

In this work we show that the choice $\theta=\frac54$ and $\beta$ large enough, that is weakening the dissipation by a logarithmic corrector, allows to prove a non-uniqueness result for distributional solutions to \cref{MainEq} that are in the relevant functional space $C_tL^2_x$.
This non-uniqueness result in a logarithmic supercritical case is obtained by stretching as much as possible the convex integration scheme proposed in \cite{BV19,LT20}.
In particular from a technical point of view, the proof is mainly based on combining new estimates for various Fourier multipliers, which allow to exploit the logarithmic corrector, and classical harmonic analysis results in some Orlicz spaces. Moreover, we also propose a new modification of the classical Beltrami flows to achieve the result by assuming a weaker condition on $\beta$. For more details see the discussion in \cref{r:beta}.

The paper is organized as follows. In \cref{s:main} we state and prove the main result by means of the iteration lemma. In \cref{c:FourierMult} we introduce and study various Fourier multipliers. \cref{c:IterLemma} is devoted to the proof of the iteration lemma, namely the inductive step which solves \cref{MainEq} in the limit.

\subsection{Notations}
To ease the notations, for any $\alpha\geq 0$ and $p \in [1,\infty]$ we denote
\begin{align*}
     C_tL^p_x
       &= C([0,\infty),L^p(\T,\R ^3)),&
     C_t\LlL^{\alpha}_x
       &= C([0,\infty),\LlL^{\alpha}(\T,\R ^3)),\\
     L^{\infty}_tL^p_x
       &= L^{\infty}([0,\infty),L^p(\T,\R ^3)),&
     L^{\infty}_t\LlL^{\alpha}_x
       &= L^{\infty}([0,\infty),\LlL^{\alpha}(\T,\R ^3)),
\end{align*}
where $\T$ denotes the 3-dimensional torus and $\LlL^{\alpha}$ is the Orlicz space associated with the Young function $A(s)=s\log^{\alpha}(2+s)$ (see \cref{DefnLlL} for the detailed definition).
We denote by $C_w(\R,L^2(\T,\R^3))$  the space of weakly continuous functions with values within the Banach space $L^2(\T,\R^3)$, while $C^{\infty}_c(\R,C^{\infty}(\T,\R^3))$ is the space of smooth compactly supported and spatially periodic functions. In addition, if $v \in C^{\infty}_c(\R,C^{\infty}(\T,\R^3))$ we denote its time support by $\supp_t(v)$.
For any sub-interval $I\subset \R_+$, let $\proj_I$ be the operator defined by
\begin{equation*}
    \proj_Ig(x)=\sum_{|k|_1\in I}\mathcal{F}(g)(k)e^{ik\cdot x}.
\end{equation*}

We denote by $P_H$ the Leray-Helmholtz projector, namely the projector onto the space of divergence-free vector fields.
For any $A\subset \R$ and $\delta>0$, let $N_{\delta}(A)$ be the $\delta$-neighborhood of A, namely
\begin{equation*}
    N_{\delta}(A)=\{z \in \R: \text{ there is }y \in A\text{ s.t. }|z-y|<\delta\}.
\end{equation*}

If $x,y\in \R^3$, then $x\times y$ will denote the standard vector cross-product between $x$ and $y$. Given $a,b \in \R$, we use the notations $a\lesssim b$ and $a\approx b$ if there exist $C, C_1,C_2>0$ such that $a\le C b$ and  $C_1a\le b\le C_2a$, respectively.

\section{Main result}\label{s:main}

We first recall the definition of weak solution to \cref{MainEq}.
\begin{defn}
    A vector field $v \in C_w(\R,L^2(\T,\R^3))$ is a weak solution to \cref{MainEq} if it is satisfied in a distributional sense, namely,
    \begin{equation*}
        \int_{\R}\int_{\T}(\partial_t \phi )\cdot v\,dx\,dt=\int_{\R}\int_{\T}v\cdot(v\cdot \nabla)\phi -\big(\nu (-\Delta)^{\theta,\beta}\phi\big)\cdot v\,dx\,dt,
    \end{equation*}
    for all $\phi \in C^{\infty}_c(\R,C^{\infty}(\T,\R^3))$ such that $\Div(\phi(t,\cdot))=0$ for all $t\in \R$.
\end{defn}

\begin{thm}\label{MainThm}
    For $\theta=\frac54$ and $\beta>\frac{29}{2}$, consider a smooth divergence-free vector field $u:[0,\infty) \times \T \to \mathbb{R}^3$, with compact support in time and zero spatial-mean,
    \begin{equation*}
        \int_{\T}u(t,x)dx=0.
    \end{equation*}
    Then, for all $\epsilon>0$, there exists a weak solution $v\in C(\R,L^2(\T,\R^3))$ of \cref{MainEq} with compact support in time and such that
    \begin{equation*}
        \||\nabla|^{\frac{3}{2}}T_M(v-u)\|_{\LinfL}\le \epsilon,
    \end{equation*}
    where $T_M$  and $|\nabla|^{\frac{3}{2}}$ are the operators corresponding to the Fourier multipliers $M(k)=\mathbf{1}_{\{k\neq 0\}}(k)\log^{-\beta}(|k|_1+10)$ and $m'(k)=|k|^{\frac{3}{2}}$ (see \cref{FourMult}).
    In particular, there are infinitely many weak solutions with zero initial condition.
\end{thm}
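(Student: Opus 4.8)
The plan is to prove \cref{MainThm} via a standard convex integration scheme in the spirit of \cite{BV19,LT20}, encoding the construction in an iteration lemma which is then applied inductively. I would work with the \emph{Navier-Stokes-Reynolds system}: one seeks a sequence of smooth, divergence-free, zero-mean vector fields $u_q$ together with symmetric trace-free ``Reynolds stresses'' $R_q$ solving
\begin{equation*}
  \partial_t u_q + \Div(u_q\otimes u_q) + \nu(-\Delta)^{\theta,\beta}u_q + \nabla p_q = \Div R_q,\qquad \Div u_q = 0,
\end{equation*}
with $R_q\to 0$ in an appropriate norm (e.g. $\LinfL$) and $u_q$ Cauchy in $C_tL^2_x$, while the quantity $\||\nabla|^{3/2}T_M(u_q-u_0)\|_{\LinfL}$ stays controlled. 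The iteration lemma, proved in \cref{c:IterLemma}, takes $(u_q,R_q)$ with suitable bounds at a frequency parameter $\lambda_q$ and produces $(u_{q+1},R_{q+1})$ at a much higher frequency $\lambda_{q+1}=\lambda_q^{b}$, decreasing $\|R_{q+1}\|$ by a prescribed factor $\delta_{q+2}$, at the cost of a perturbation $w_{q+1}=u_{q+1}-u_q$ with $\|w_{q+1}\|_{C_tL^2_x}\lesssim\delta_{q+1}^{1/2}$ and $\|w_{q+1}\|_{C^1}\lesssim\lambda_{q+1}^{C}$; choosing $\delta_q=\lambda_q^{-2\beta_\ast}$ with $\beta_\ast$ small and $\sum\delta_q^{1/2}<\infty$ gives the limit solution.

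The core of the construction is the perturbation. I would set $w_{q+1}^{(p)}=\sum_\xi a_\xi(x,t)\,\mathbf{W}_\xi$, where $\mathbf{W}_\xi$ are the \emph{impulsed Beltrami flows} — the paper's variant of the intermittent Beltrami flows of \cite{BV19}, built from Beltrami waves modulated by concentrated bump functions so as to have disjoint supports in a periodic tiling — and the amplitudes $a_\xi$ are chosen, via the geometric lemma for symmetric tensors, so that $\sum_\xi a_\xi^2 \fint \mathbf{W}_\xi\otimes\mathbf{W}_\xi \approx -R_q$, thereby cancelling the low-frequency part of the old stress and the principal quadratic self-interaction. One then adds the usual incompressibility corrector $w_{q+1}^{(c)}$ (so that $w_{q+1}^{(p)}+w_{q+1}^{(c)}$ is exactly divergence-free) and, if needed, a temporal corrector. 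The new Reynolds stress $R_{q+1}$ is obtained by applying the anti-divergence operator $\AntiDiv$ to the various error terms: the linear/dissipative error $\nu(-\Delta)^{\theta,\beta}w_{q+1}$, the transport (Nash) error, the oscillation error, and the corrector errors. Each must be estimated in $\LinfL$ and shown to be $\ll\delta_{q+2}$. The point of the impulsed Beltrami flows and of the new Fourier-multiplier estimates from \cref{c:FourierMult} — combined with harmonic analysis in the Orlicz space $\LlL^{\alpha}$ — is to absorb the logarithmic corrector: since $\theta=\frac54$ is exactly critical, the dissipative error $\|\nu(-\Delta)^{5/4,\beta}w_{q+1}\|$ is only logarithmically better than barely-divergent, and the gain $\log^{-\beta}$ must be converted into an actual power gain through a careful interpolation exploiting the intermittency (the concentration parameter of the bump functions) and the $L\log L$ integrability.

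The main obstacle is precisely controlling the hyperdissipative error at the critical exponent. With $\theta=\frac54$, scaling is exactly borderline in $L^2$, so the usual slack that convex integration exploits is absent; one only has the logarithmic factor $\log^{-\beta}(|k|_1)$ to play with. Making this work forces $\beta$ to be large ($\beta>\frac{29}{2}$, as in the statement — see \cref{r:beta}), and requires (i) sharp bounds on the operators $(-\Delta)^{5/4,\beta}$ and $T_M|\nabla|^{3/2}$ acting on the highly concentrated building blocks, which is where the multiplier estimates of \cref{c:FourierMult} enter; and (ii) replacing the $L^p$-based bookkeeping of \cite{BV19} by $\LlL^{\alpha}$-based estimates, so that the endpoint $p=1$ losses are only logarithmic and can be beaten by $\log^{-\beta}$. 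Once the iteration lemma is established with these estimates, \cref{MainThm} follows: start from $u_0=u$ with $R_0$ small (possible because $u$ is smooth and compactly supported in time, so its own Reynolds defect $R_0 = \AntiDiv(\partial_t u + \Div(u\otimes u)+\nu(-\Delta)^{\theta,\beta}u)$ can be made small in $\LinfL$ by an initial high-frequency push, or by rescaling), iterate, pass to the limit $v=\lim u_q\in C_tL^2_x$ which solves \cref{MainEq} in the distributional sense with $R_q\to0$, and note that the telescoping sum $v-u=\sum_{q\ge0}w_{q+1}$ together with the per-step control of $\||\nabla|^{3/2}T_M w_{q+1}\|_{\LinfL}\lesssim\delta_{q+1}^{s}$ for some $s>0$ yields the claimed smallness $\||\nabla|^{3/2}T_M(v-u)\|_{\LinfL}\le\epsilon$. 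Taking $u\equiv0$ on $[0,T_0]$ but nonzero afterwards, or simply $u\equiv 0$ and exploiting nontriviality of the scheme, produces infinitely many distinct weak solutions with zero initial data.
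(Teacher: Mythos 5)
Your proposal follows essentially the same route as the paper: the theorem is reduced to an iteration lemma for the Navier--Stokes--Reynolds system, proved by perturbing with amplitude-modulated impulsed Beltrami flows plus incompressibility and temporal correctors, estimating the new stress in $\LinfL$ via the Orlicz-space multiplier bounds, and then summing the telescoping series. The only (harmless) deviations are that the paper's iteration lemma accepts an arbitrary target $\delta_{q+2}$, so no preliminary step is needed to make $R_0$ small, and that the Fejér-kernel modulation concentrates the building blocks without actually giving them disjoint supports.
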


The result is shown by using the classical iterative procedure of the convex integration theory, i.e. following the groundbreaking work of Buckmaster and Vicol \cite{BV19}.
Therefore, the proof is strongly based on the following lemma.
\begin{lemma}[Iteration Lemma]\label{IterLemma}
    Fix $\theta=\frac54$ and $\beta>\frac{29}{2}$. let $(v_q,p_q,R_q)$ be a smooth solution to the approximate system
    \begin{equation}\label{ApproxEq}
      \begin{cases}
        \partial_tu+\Div(u\otimes u)+\nu (-\Delta)^{\frac{5}{4},\beta}u+\nabla p=\Div(R),\\
        \Div(u)=0,\\
        \Tr(R(t,x))=0,\qquad
          R(t,x)=R(t,x)^T\quad
          \text{for all }t,x,
      \end{cases}
    \end{equation}
    such that 
    \begin{equation*}
        \|R_q\|_{\LinfL}\le \delta_{q+1}.
    \end{equation*}
    Then for every $\delta_{q+2}>0$, there exists a smooth solution $(v_{q+1},p_{q+1},R_{q+1})$ of \cref{ApproxEq} such that
    \begin{equation}\label{IterLemma1}
      \begin{gathered}
        \|R_{q+1}\|_{\LinfL}\le \delta_{q+2},\\
        \supp_t (v_{q+1}) \cup \supp_t (R_{q+1}) \subset N_{\delta_{q+1}} (\supp_t (v_{q}) \cup \supp_t (R_{q})).
      \end{gathered}
    \end{equation}
    Moreover, the following estimates hold
    \begin{equation}\label{IterLemma2}
      \begin{aligned}
        \|v_{q+1}-v_q\|_{\LinfLSec}
          &\le C \delta_{q+1}^{\frac{1}{2}},\\
        \||\nabla|^{\frac{3}{2}}T_M(v_{q+1}-v_q)\|_{\LinfL}
          &\le \delta_{q+2}.
      \end{aligned}
    \end{equation}
\end{lemma}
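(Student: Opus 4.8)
The plan is to realize \cref{IterLemma} as a single step of the convex integration scheme of Buckmaster and Vicol \cite{BV19}, adapted as in Luo and Titi \cite{LT20} to the hyperdissipative operator $\Diss$ and refined so as to exploit the logarithmic corrector. First I would mollify $v_q$ and $R_q$ at a small scale $\ell$, in both space and time, obtaining $(v_\ell,R_\ell)$ with $\|R_\ell\|_{\LinfL}\le\|R_q\|_{\LinfL}$, and introduce a nonnegative amplitude $\rho$, smooth at scale $\ell$, with $\|\rho\|_{\LinfL}\lesssim\delta_{q+1}$ and dominating $|R_\ell|$ pointwise, so that the geometric lemma of \cite{BV19} applies to $\mathrm{Id}-R_\ell/\rho$ and supplies amplitude coefficients $\am=\am(t,x)$, indexed over a fixed finite family of directions $\xi$, with $\sum_\xi\am^2\,(\xi\otimes\xi)=\rho\,\mathrm{Id}-R_\ell$. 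The perturbation is then $w_{q+1}=w_{q+1}^{(p)}+w_{q+1}^{(c)}+w_{q+1}^{(t)}$, where the principal part $w_{q+1}^{(p)}=\sum_\xi\am\,\BW$ is built from the impulsed Beltrami flows $\BW$ of the paper (oscillating at a large frequency $\lambda_{q+1}$ and as intermittent as possible; see \cref{r:beta}), $w_{q+1}^{(c)}$ is the divergence corrector and $w_{q+1}^{(t)}$ the temporal corrector that absorbs the resonant part of the oscillation error; I then set $v_{q+1}=v_\ell+w_{q+1}$. Since every object is obtained from quantities supported in $\supp_t(v_q)\cup\supp_t(R_q)$ multiplied by smooth cutoffs, choosing $\ell<\delta_{q+1}$ yields the support inclusion in \cref{IterLemma1}. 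The two parameters at my disposal are $\ell$, to be taken small first, and $\lambda_{q+1}$, to be taken large afterwards and allowed to depend on $\delta_{q+1}$, $\delta_{q+2}$, $v_q$, $R_q$ and $\ell$, since the lemma only requires the existence of one admissible choice.

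The estimates \cref{IterLemma2} come from the bounds on $\BW$ and $\am$. Writing $v_{q+1}-v_q=w_{q+1}+(v_\ell-v_q)$, the mollification difference is a fixed smooth function, negligible in $\LinfLSec$ and with $\||\nabla|^{3/2}T_M(\cdot)\|_{\LinfL}\le\tfrac12\delta_{q+2}$ once $\ell$ is small, so it is enough to estimate $w_{q+1}$. As $\sum_\xi\am^2$ equals $\rho\,\mathrm{Id}-R_\ell$ and the impulsed Beltrami flows are $L^2$-normalized and nearly $L^2$-orthogonal across the different $\xi$, one gets $\|w_{q+1}^{(p)}\|_{\LinfLSec}^2\approx\|\rho\|_{\LinfL}\lesssim\delta_{q+1}$, and the two correctors are of lower order once $\lambda_{q+1}$ is large; this gives the first line. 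For the second line, $w_{q+1}$ is localized in frequencies comparable to $\lambda_{q+1}$, so $\||\nabla|^{3/2}T_M w_{q+1}\|_{\LinfL}\lesssim\lambda_{q+1}^{3/2}\log^{-\beta}(\lambda_{q+1})\,\|w_{q+1}\|_{\LinfL}$, and $\|w_{q+1}\|_{\LinfL}\lesssim\delta_{q+1}^{1/2}\max_\xi\|\BW\|_{L^1_x}$ by the improved Hölder inequality; the intermittency of the impulsed Beltrami flows makes $\max_\xi\|\BW\|_{L^1_x}$ a negative power of $\lambda_{q+1}$ close to $\lambda_{q+1}^{-3/2}$, so the whole quantity tends to $0$ as $\lambda_{q+1}\to\infty$. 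It is precisely the budget in this last chain — the loss $\lambda_{q+1}^{3/2}$ against the gains $\log^{-\beta}(\lambda_{q+1})$ and the intermittency power — that both forces the impulsed Beltrami flows and pins down the threshold $\beta>\tfrac{29}{2}$ (see \cref{r:beta}).

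Next I would check that $v_{q+1}$ solves \cref{ApproxEq} with a pressure $p_{q+1}$ absorbing all trace terms and with $R_{q+1}$ equal to the sum of a \emph{viscous error} $\AntiDiv(\nu\,\Diss w_{q+1})$, a \emph{transport/Nash error} $\AntiDiv\bigl(\partial_t(w_{q+1}^{(p)}+w_{q+1}^{(c)})+\Div(v_\ell\otimes w_{q+1}+w_{q+1}\otimes v_\ell)\bigr)$, an \emph{oscillation error} collecting $\AntiDiv\Div(w_{q+1}\otimes w_{q+1}+R_\ell)$ and the contributions of $w_{q+1}^{(t)}$, and a \emph{commutator error} produced by the mollification, where $\AntiDiv$ is the antidivergence studied in \cref{c:FourierMult}. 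Each of the four is estimated in $\LinfL$: the commutator error tends to $0$ as $\ell\to0$ by standard mollification estimates; the viscous error equals, up to the bounded Fourier multipliers of \cref{c:FourierMult}, a constant multiple of $\||\nabla|^{3/2}T_M w_{q+1}\|_{\LinfL}$ and is therefore controlled exactly as in the previous paragraph; the transport/Nash error is small because $\AntiDiv$ gains a factor $\lambda_{q+1}^{-1}$ while $\partial_t$, $v_\ell\cdot\nabla$ and the amplitudes act at the much lower frequency $\ell^{-1}$ (the temporal concentration parameter of the impulsed flows being kept subordinate to $\lambda_{q+1}$); and the oscillation error is small because the choice of $\am$ together with the identity that the spatial average of $\BW\otimes\BW$ is $\xi\otimes\xi$ reduces the low-frequency part of $w_{q+1}^{(p)}\otimes w_{q+1}^{(p)}+R_\ell$ to the pure-pressure term $\rho\,\mathrm{Id}$, leaving only high-frequency contributions on which $\AntiDiv\Div$ gains a large negative power of $\lambda_{q+1}$, while the genuinely resonant part is cancelled by $w_{q+1}^{(t)}$ just as the temporal building block is used in \cite{BV19}. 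Taking $\ell$ small and then $\lambda_{q+1}$ large enough makes the sum of the four contributions $\le\delta_{q+2}$, which gives \cref{IterLemma1} and completes the step.

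I expect the main obstacle to be the viscous error at the critical exponent $\theta=\tfrac54$. Since $\AntiDiv$ recovers only one derivative, the operator $\AntiDiv\circ\Diss$ carries the Fourier symbol $|k|^{3/2}\log^{-\beta}(|k|_1)$ (here $2\theta-1=\tfrac32$), so one loses a full power $\lambda_{q+1}^{3/2}$ that must be defeated by the logarithmic gain $\log^{-\beta}(\lambda_{q+1})$ together with the space--time intermittency of the building blocks; the intermittent Beltrami flows of \cite{BV19} do not gain quite enough at this exponent, which is why the impulsed Beltrami flows are introduced and why their concentration exponent must be pushed as close to $\tfrac32$ as the remaining error terms permit — it is the interplay of these competing powers, rather than any single estimate, that singles out the range $\beta>\tfrac{29}{2}$. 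A secondary technical difficulty is that neither $\AntiDiv$ nor the frequency localizations entering the error terms are bounded on $L^1$; this is circumvented by working in the Orlicz space $\LlL^{\alpha}$, where the relevant Fourier multipliers are shown to be bounded in \cref{c:FourierMult}, and by using the explicit high-frequency structure of $w_{q+1}$, so that all the $\LinfL$ estimates above are legitimate.
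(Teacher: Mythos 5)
Your architecture coincides with the paper's: impulsed Beltrami flows as building blocks, amplitudes supplied by the geometric lemma, the three‑part perturbation $w^{(p)}+w^{(c)}+w^{(t)}$, the splitting of the new stress into oscillation, linear and corrector errors, the Orlicz‑space multiplier estimates to compensate for the failure of $L^1$‑boundedness, and a final logarithmic bookkeeping that produces $\beta>\frac{29}{2}$. One structural difference: the paper does not mollify. Since $(v_q,p_q,R_q)$ is smooth by hypothesis, it works with $R_q$ directly, controlling the time support with a cutoff $\psi$ and the amplitude with the function $\chi$ in \cref{eq:rho}; your mollification and the attendant commutator error are harmless but superfluous.

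The one point where your plan, read literally, would fail is the transport error. You claim it is small because $\AntiDiv$ gains $\lambda_{q+1}^{-1}$ while $\partial_t$ acts slowly, "the temporal concentration parameter being kept subordinate to $\lambda_{q+1}$". This is inconsistent with the rest of your scheme: to make $w^{(t)}$ small in $L^2$ and to cancel the resonant part of the oscillation error one needs $\mu>r^{3/2}$, and since the concentration $r$ must be pushed up to (essentially) $\lambda$ to defeat the $\lambda^{3/2}$ loss in the viscous term, necessarily $\mu\gg\lambda$ --- the paper takes $\mu=\lambda^{3/2}\log^{z}(\lambda)$. Hence $\partial_t w^{(p)}$ costs a factor $\lambda\sigma r\mu$, and after the gain $\lambda^{-1}$ coming from $w^{(p)}+w^{(c)}=\lambda^{-1}\nabla\times w^{(p)}$ and the factor $r^{-3/2}$ from the building block, the transport error is of size $\sigma\mu r^{-1/2}$, which for the admissible parameters is \emph{neutral} in powers of $\lambda$ and is decided purely by logarithms (this is the constraint $z+\frac{y}{2}<-3$ in the paper). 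The same is true of essentially every term: at $\theta=\frac54$ there is no genuine power gain anywhere, which is why the $\log^3$ losses from the $\mathcal{L}(L^1,L^1)$ bounds of \cref{Div-1} and \cref{Div-1Log} and the $\log^{\alpha}(\lambda)$ factors in \cref{Point6}--\cref{Point7} must all be tracked, and why the threshold is $\frac{29}{2}$. So your outline identifies the right error terms, but the stated reason for the smallness of the transport error is wrong, and the "budget" you describe for the viscous error (a power of $\lambda$ beaten by $\log^{-\beta}$ plus intermittency) in fact reduces to a pure competition of logarithms that your proposal defers rather than carries out.
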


\begin{oss}[On the parameter $\beta$]\label{r:beta}
The requirement on the parameter $\beta$ will be clear in the proof of \cref{IterLemma}. In particular, it is required that $\beta>\frac{29}{2}$. This specific value is due to various harmonic analysis estimates (in negative, forcing a larger value of $\beta$), which are mainly shown in \cref{c:FourierMult}, and to a suitable construction of the building blocks of the convex integration scheme (in positive, lowering the minimal threshold we impose on $\beta$), which differs from the intermittent Beltrami flows \cite{BV19} by a logarithmic factor (see \cref{DiffBB}).

As already mentioned in the introduction, Tao \cite{T10} and Barbato et al. \cite{BMR15} proved the existence and uniqueness of smooth solutions to an equation similar to \cref{MainEq}, where the dissipative term $D(u)$ is defined via Fourier transform as 
\begin{equation*}
    \mathcal{F}[D(u)](k)=\mathbf{1}_{\{k\neq 0\}}(k)\frac{|k|^{2\theta}}{g(|k|)}\mathcal{F}(u)(k),\qquad\text{for all } k \in \Z^3, 
\end{equation*}
for any positive, non-decreasing function $g$ such that $\int_1^{\infty}\frac{1}{sg(s)}ds=\infty$. This operator differs from our dissipative term in two aspects.
Firstly, it presents a corrector computed on $|k|$, while our works on $|k|_1$.
This difference is due to the fact that we need to estimate the $\mathcal{L}(L^1,L^1)$-norm of the operator associated with the Fourier multiplier $log^{-\beta}(10+|k|_1)\proj_{\geq N}$ (see \cref{Div-1} and \cref{Div-1Log}) in order to exploit completely the Fourier properties of our building blocks.
The second difference is the presence of the function $g$. Even if their result covers a larger family of dissipative terms, the most interesting one is clearly given by $g(s)=log^{\beta}(10+|k|)$ with $\beta\le 1$. 

Therefore, apart from the structural differences between our dissipative term and the one adopted in \cite{T10,BMR15}, it can be interesting to study why their constraint on $\beta$ is adimensional while ours depends on the dimension (see \cref{Div-1} and \cref{Div-1Log}) and to understand if additional technical novelties could allow to weaken our assumption on the $\beta$ parameter in order to get closer to the threshold identified by their results.
\end{oss}

Assuming the previous Lemma, the proof of \cref{MainThm} basically follows the lines of Luo and Titi \cite{LT20}. We report it for completeness.

\begin{proof}[Proof of \cref{MainThm}]
    Consider the triple
    \begin{gather*}
        v_0=u,\qquad
        p_0=-\frac{1}{3}|v_0|^2,\\
        R_0=\mathcal{R}(\partial_tv_0+\nu(-\Delta)^{\frac{5}{4},\beta}v_0)+v_0\otimes v_0+p_0I,
    \end{gather*}
    where $\mathcal{R}$ is the anti-divergence operator, defined below in \cref{AntiDivDefn}.

    Given $\delta_1=\|R_0\|_{\LinfL}$ and $\delta_q=2^{-q}\epsilon$, consider the sequence $(v_q,p_q,R_q)_{q \in \N}$ coming from \cref{IterLemma}.
    Then,
    \begin{gather*}
      \sum_q \|v_{q+1}-v_q\|_{\LinfLSec}\lesssim \sum_{q}\delta_{q+1}^{\frac{1}{2}}< \infty,\\
      \|R_q\|_{\LinfL}\to 0\qquad \text{as }q \to \infty.
    \end{gather*}
    Therefore, $(v_q)_q$ converges strongly to a weak solution $v\in C_TL^2_x$. Moreover,
    \begin{gather*}
         \||\nabla|^{\frac{3}{2}}T_M(v-u)\|_{\LinfL}\le \sum \||\nabla|^{\frac{3}{2}}T_M(v_{q+1}-v_q)\|_{\LinfL}\le \sum_q \delta_{q+1}\le \epsilon,\\
         \supp_t(v)\subset \cup_q \supp_t (v_q)\subset N_{\sum_q \delta_{q+1}}(\supp_t(u))\subset N_{\delta_1+\epsilon}\supp_t(u).
    \end{gather*}
    
    Finally, infinitely many weak solutions with zero initial condition can be obtained by considering $u(t,x)=\phi(t)\sum_{|k|\le N}a_ke^{ik\cdot x}$ with $a_0=0$, $a_k^*=a_{-k}$, $a_k\cdot k=0$ and $\phi \in C^{\infty}(\R _+)$, with $\phi(0)=0$.
\end{proof}

\section{Preliminaries on Fourier multipliers}\label{c:FourierMult}
In this section we mainly prove that the operators associated with various Fourier multipliers are bounded from $\LlL^{\beta}$ to $\LlL^{\alpha}$ for suitable $\alpha,\beta \geq 0$.
Let us start by defining the functional space $\LlL^{\alpha}$ and by proving an elementary inequality.

\begin{defn}\label{DefnLlL}
Given $0\le\alpha <\infty$ and $d\in \N$, consider the space 
\begin{equation*}
    \LlL^{\alpha}=\Bigl\{f:\T\to\R^d\text{ measurable }:\int_{\T}|f(x)|\log^{\alpha}(2+|f(x)|)\,dx < \infty\Bigr\}.
\end{equation*}
This is a Banach space when equipped with the so-called \emph{Luxemburg norm} 
    \begin{equation}\label{LuxDefn}
        \|f\|_{\LlL^{\alpha}}=\inf\Bigl\{\lambda >0: \int_{\T}A(|f(x)|/\lambda)dx\le 1\Bigr\},
    \end{equation}
    where $A(s)=s\log^{\alpha}(2+s)$ for all $s\geq 0$.
\end{defn}

\begin{lemma}\label{PropLuxemburg}
  Consider a scalar function $f \in L^{\infty}(\T)$ and $g \in \LlL^{\alpha}(\T,\R^d)$ for $\alpha\geq 0$. Then
  \begin{equation*}
      \|fg\|_{\LlL^{\alpha}}\le \|f\|_{L^{\infty}}\|g\|_{\LlL^{\alpha}}.
  \end{equation*}
\end{lemma}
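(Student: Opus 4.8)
The plan is to verify the defining inequality for the Luxemburg norm directly from the definition in \cref{LuxDefn}. Set $\lambda = \|g\|_{\LlL^{\alpha}}$ and, to avoid the trivial cases, assume $0 < \lambda < \infty$ and $\|f\|_{L^{\infty}} > 0$; the case $\|f\|_{L^{\infty}} = 0$ is immediate since then $fg = 0$ a.e., and if $\|g\|_{\LlL^{\alpha}} = \infty$ there is nothing to prove. Put $\mu = \|f\|_{L^{\infty}}\,\lambda$. The goal is to show $\int_{\T} A(|f(x)g(x)|/\mu)\,dx \le 1$, for then $\|fg\|_{\LlL^{\alpha}} \le \mu$ by the infimum in \cref{LuxDefn}.

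First I would record the pointwise bound $|f(x)g(x)|/\mu = \bigl(|f(x)|/\|f\|_{L^{\infty}}\bigr)\,\bigl(|g(x)|/\lambda\bigr) \le |g(x)|/\lambda$ for a.e.\ $x$, since $|f(x)| \le \|f\|_{L^{\infty}}$ a.e. The key structural fact is that the Young function $A(s) = s\log^{\alpha}(2+s)$ is non-decreasing on $[0,\infty)$: indeed $s \mapsto s$ and $s \mapsto \log^{\alpha}(2+s)$ are both non-negative and non-decreasing for $\alpha \ge 0$, hence so is their product. Applying monotonicity of $A$ to the pointwise bound gives $A(|f(x)g(x)|/\mu) \le A(|g(x)|/\lambda)$ a.e.

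Then integrating over $\T$ and using the definition of $\lambda$ as the Luxemburg norm of $g$ yields
\begin{equation*}
  \int_{\T} A(|f(x)g(x)|/\mu)\,dx \le \int_{\T} A(|g(x)|/\lambda)\,dx \le 1,
\end{equation*}
where the last inequality holds because $\lambda = \|g\|_{\LlL^{\alpha}}$ is the infimum of those $\lambda'$ for which $\int_{\T} A(|g|/\lambda') \le 1$; more precisely, one uses that this infimum is attained (or passes to the limit along a decreasing sequence $\lambda_n \downarrow \lambda$ with $\int_{\T} A(|g|/\lambda_n)\,dx \le 1$ and applies the monotone convergence theorem, since $A(|g(x)|/\lambda_n) \uparrow A(|g(x)|/\lambda)$ by monotonicity of $A$ and continuity). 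Hence $\mu$ is admissible in the infimum defining $\|fg\|_{\LlL^{\alpha}}$, so $\|fg\|_{\LlL^{\alpha}} \le \mu = \|f\|_{L^{\infty}}\|g\|_{\LlL^{\alpha}}$, which is the claim.

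The only genuinely delicate point is the justification that $\int_{\T} A(|g|/\lambda)\,dx \le 1$ when $\lambda$ is exactly the Luxemburg norm rather than something strictly larger; this is the standard "closedness of the unit modular ball" argument for Orlicz spaces, handled by the monotone convergence step above, and it relies only on $A$ being non-decreasing and lower semicontinuous (continuous, in fact). Everything else is a one-line monotonicity estimate, so I do not anticipate any real obstacle.
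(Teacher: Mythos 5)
Your proof is correct and follows essentially the same route as the paper's: the pointwise bound $|fg|/(\|f\|_{L^{\infty}}\|g\|_{\LlL^{\alpha}})\le |g|/\|g\|_{\LlL^{\alpha}}$, monotonicity of $A$, and the definition of the Luxemburg norm. Your extra care in justifying that the modular of $|g|/\|g\|_{\LlL^{\alpha}}$ is at most $1$ (via monotone convergence along $\lambda_n\downarrow\lambda$) is a standard point the paper elides, but it does not change the argument.
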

\begin{proof}
    If $A(s)=s\log^{\alpha}(2+s)$, then
    \begin{equation*}
        \int_{\T} A\Big(\frac{|fg|}{\|f\|_{L^{\infty}}\|g\|_{\LlL^{\alpha}}}\Big)dx\le \int_{\T} A\Big(\frac{|g|}{\|g\|_{\LlL^{\alpha}}}\Big)dx\le 1,
    \end{equation*}
    where the first inequality follows since $A$ is non-decreasing, and the second follows from the definition of the Luxemburg norm.
\end{proof}

Now, we first define the operator corresponding to a Fourier multiplier. Then we will focus on specific multipliers. 
\begin{defn}\label{FourMult}
    Given a family $m=(m(k))_{k \in \Z^3}$, its corresponding operator $T_m$ is defined by
    \begin{equation*}
        T_m(f)(x)=\sum_{k \in \Z^3}m(k)\mathcal{F}(f)(k)e^{ik\cdot x},\qquad \text{ for } f \in C^{\infty}(\T),
    \end{equation*}
    where $\mathcal{F}(f)(k)$ is the $k^{\text{th}}$ Fourier coefficient of $f$.
\end{defn}

\begin{prop}\label{Div-1}
    Consider $m(k)=\mathbf{1}_{\{k\neq 0\}}(k)\frac{1}{|k|_1}$, then $T_m$ is bounded from $L^1(\T)$ to itself.

    Moreover, for all $N\in \N$,
    \begin{equation}\label{ProjEstimateDiv-1}
        \|\proj_{|k|_1> N}T_m\|_{\mathcal{L}(L^1,L^1)}\lesssim \frac1N\log^3(N),
    \end{equation}
    where $\mathcal{L}(L^1,L^1)$ is the space of bounded and linear operators from $L^1(\T)$ to itself.
\end{prop}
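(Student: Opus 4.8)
The plan is to realize $T_m$ as a periodization of a classical convolution operator and then estimate its kernel. The multiplier $m(k)=\mathbf{1}_{\{k\neq 0\}}|k|_1^{-1}$ is, up to a harmless constant, the symbol of $|\nabla|_1^{-1}$ where $|\nabla|_1$ is built from the $\ell^1$-norm; equivalently one can write
\begin{equation*}
    \frac{1}{|k|_1}=\int_0^{\infty}e^{-t|k|_1}\,dt=\int_0^{\infty}\prod_{j=1}^{3}e^{-t|k_j|}\,dt,
\end{equation*}
so that $T_m f=\int_0^\infty (P_t * f)\,dt$, where $P_t$ is the periodization of the tensor product of three one-dimensional Poisson-type kernels with symbol $e^{-t|k_j|}$. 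The first step is therefore to write $T_m f = K * f$ on $\T$ with $K(x)=\sum_{k\neq 0}|k|_1^{-1}e^{ik\cdot x}$, and to show the bound $\|T_m\|_{\mathcal L(L^1,L^1)}\le \|K\|_{L^1(\T)}<\infty$, which reduces everything to an $L^1$ kernel estimate. For the refined bound \eqref{ProjEstimateDiv-1} the operator $\proj_{|k|_1>N}T_m$ has kernel $K_N(x)=\sum_{|k|_1>N}|k|_1^{-1}e^{ik\cdot x}$, and the claim becomes $\|K_N\|_{L^1(\T)}\lesssim N^{-1}\log^3 N$.

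The second step is the kernel estimate itself. Using the integral representation, $K(x)\approx\int_0^\infty \prod_{j}\mathsf P_t(x_j)\,dt$ minus the mean, where $\mathsf P_t$ is the one-dimensional periodic Poisson kernel, which satisfies $\mathsf P_t(s)\lesssim t/(t^2+s^2)$ for $|s|\le\pi$ (and is $\approx t^{-1}$ for $|s|\lesssim t$, decaying like $t^{-1}e^{-c|s|/t}$ for $t\lesssim|s|$... actually the sharp pointwise bound $\mathsf P_t(s)\approx \min(t^{-1}, t/s^2)$ on the torus suffices). Integrating in $t$ gives the expected logarithmic-type behaviour: $\int_0^\infty \prod_j \mathsf P_t(x_j)\,dt \lesssim |x|^{-3}$ away from the diagonal sublattices but with logarithmic corrections where some coordinates vanish; more precisely one expects $|K(x)|\lesssim (\prod_j|x_j|)^{-1}$-type bounds truncated appropriately, so that $K\in L^1$. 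For $K_N$ one restricts the frequency sum; by Abel summation / writing $\proj_{|k|_1>N}$ as a smooth-in-scale cutoff and summing Littlewood–Paley-type pieces at dyadic scales $2^j\gtrsim N$, each piece contributes a kernel of $L^1$-norm $\lesssim 2^{-j}$ times a power of $j$ coming from the three-fold tensor structure (each factor producing one logarithm, hence $\log^3$), and summing the geometric series in $2^{-j}$ from $j\approx\log_2 N$ yields $\lesssim N^{-1}\log^3 N$.

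The main obstacle I anticipate is the $\log^3 N$ factor: one must track carefully where the three logarithms come from. They arise precisely because the $\ell^1$-norm multiplier factorizes into three one-dimensional pieces, each of which, when frequency-truncated at scale $N$, produces a kernel whose $L^1$ norm on $\T^1$ is $\approx N^{-1}\log N$ rather than $N^{-1}$ (the extra $\log$ being the Dirichlet-kernel-type loss from the sharp frequency cutoff $\mathbf 1_{|k_j|\le\cdot}$, which is not a Hörmander–Mikhlin multiplier). Handling the cross terms in $|k|_1>N$ (where, say, one coordinate is huge and the others small) requires splitting the region $\{|k|_1>N\}$ according to which coordinates exceed $\sim N$ and estimating each contribution by the one-dimensional results; this bookkeeping, together with the need to control the interaction between the anti-derivative in $t$ and the three truncated Poisson factors, is the technical heart. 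A cleaner route avoiding pointwise Poisson estimates is to reduce directly to one dimension: prove the scalar statement on $\T^1$ that $\|\proj_{|k|>N}T_{1/|k|}\|_{\mathcal L(L^1,L^1)}\lesssim N^{-1}\log N$ via an explicit summation of $\sum_{|k|>N}k^{-1}e^{iks}$ (an Abel-summation/Dirichlet-kernel computation giving $\lesssim\min(1/(N|s|),\,|\log|s||)$, whose $L^1$ norm is $\lesssim N^{-1}\log N$), and then assemble the three-dimensional operator from tensor products of one-dimensional ones via the integral representation above, multiplying the three $L^1$ bounds; I expect the authors to follow essentially this tensorization strategy.
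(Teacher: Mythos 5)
Your subordination argument does handle the first claim cleanly: writing $1/|k|_1=\int_0^\infty e^{-t|k|_1}\,dt$ and noting that $\sum_{k\in\Z^3}e^{-t|k|_1}e^{ik\cdot x}=\prod_{j=1}^3\mathsf P_t(x_j)$ is a nonnegative product of periodic Poisson kernels of unit mass, one gets $\|\prod_j\mathsf P_t(\cdot_j)-1\|_{L^1(\T)}\le 2$ for all $t>0$ and $\lesssim e^{-t}$ for $t\ge 1$, so the kernel $K=\int_0^\infty\bigl(\prod_j\mathsf P_t-1\bigr)\,dt$ lies in $L^1(\T)$ and $T_m$ is bounded on $L^1$. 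This is a genuinely different (and for this half, more self-contained) route than the paper's, which instead Abel-sums $\sum_{|k|_1\le N}|k|_1^{-1}e^{ik\cdot x}$ against the $\ell^1$-ball Dirichlet kernels $D^1_j(x)=\sum_{|k|_1\le j}e^{ik\cdot x}$ and invokes the known bound $\|D^1_j\|_{L^1}\lesssim\log^3 j$.

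The quantitative tail bound \eqref{ProjEstimateDiv-1} is where your proposal has a genuine gap. The factor $\log^3N$ does not arise from ``three one-dimensional logarithms multiplied together'': the sharp cutoff $\mathbf 1_{\{|k|_1>N\}}$ does not factorize as a tensor product of one-dimensional cutoffs (the $\ell^1$ ball is not a box), and the subordination formula factorizes only the smooth weight $e^{-t|k|_1}$, never the truncation. Consequently neither the ``assemble from tensor products of 1D operators'' step nor the claim that each dyadic piece costs $2^{-j}$ times one logarithm per coordinate is justified as written; likewise, splitting $\{|k|_1>N\}$ according to which coordinates exceed $\sim N$ does not even cover the region (all three coordinates can be small while their sum exceeds $N$), and the resulting pieces are still not product sets. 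The missing ingredient is precisely the estimate $\|D^1_M\|_{L^1(\T)}\lesssim\log^3 M$ for the polyhedral ($\ell^1$-ball) Dirichlet kernel, a true but nontrivial fact that you neither prove nor cite (the paper takes it from \cite{KL21}). Once it is available, a single Abel summation in the variable $j=|k|_1$ over $j>N$ — or your dyadic decomposition, with each annulus written as a difference of two such Dirichlet kernels — gives $\|\proj_{|k|_1>N}T_m\|_{\mathcal L(L^1,L^1)}\lesssim \tfrac1N\log^3N+\sum_{j>N}\log^3(j)\,j^{-2}\lesssim N^{-1}\log^3N$, which is exactly the paper's argument. Without that input your proof of \eqref{ProjEstimateDiv-1} does not close.
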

\begin{proof} 
    Consider $f_N(x)=\sum_{|k|_1\le N}\frac{1}{|k|_1}e^{ik\cdot x},$ then the following formula holds,
    \begin{equation*}
        f_N(x)=\sum_{j=1}^{N-1}D^1_j(x)\Bigl(\frac{1}{j}-\frac{1}{j+1}\Bigr)+D^1_N(x)\frac{1}{N},
    \end{equation*}
    where $D^1_j(x)=\sum_{|k|_1\le j}e^{ik\cdot x}$.

    Therefore, for $N<M$,
    \begin{equation*}
        \|f_N-f_M\|_{L^1}\le \frac1N\|D_N^1\|_{L^1}+\frac1M\|D_M^1\|_{L^1}+\sum_{j=N}^{M-1}\|D^1_j\|_{L^1}\big(\frac{1}{j}-\frac{1}{j+1}\big). 
    \end{equation*}
    It is known that $\|D^1_j\|_{L^1}\lesssim \log^3(j)$ (see \cite{KL21}), so
    \begin{equation*}
        \|f_N-f_M\|_{L^1}\le \frac1N\log^3(N)+\frac1M\log^3(M)+\sum_{j=N}^{M-1}\frac{\log^3(j)}{j(j+1)}\lesssim \frac1N\log^3(N).
    \end{equation*}
    The sequence $(f_N)_N$ is a Cauchy sequence in $L^1$ and converges to $f$ such that $\nu(A)=\int_Af(x)\,dx$ is a finite Borel measure on the torus which has $m(k)$ as Fourier coefficients. The existence of the measure $\nu$ implies that $m$ is a $(1,1)$-multiplier and that $\|T_m\|_{\mathcal{L}(L^1,L^1)}\le |\nu|=\|f\|_{L^1}$, where $|\nu|$ is the total variation of the measure $\nu$ (see \cite{G08}).
    At last, by the same argument, the estimate on $\|\proj_{|k|_1> N}T_m\|_{\mathcal{L}(L^1,L^1)}$ follows from
    \begin{equation*}
        \|f-f_N\|_{L^1}=\lim_{M\to \infty}\|f_M-f_N\|_{L^1}\le \frac1N\log^3(N).\qedhere
    \end{equation*}
\end{proof}
\begin{oss}\label{Div-1Log}
        By repeating the same argument above, the conclusions of \cref{Div-1} hold for $M(k)=\mathbf{1}_{k\neq 0}(k)\frac{1}{\log^{\beta}(|k|_1+10)}$, with $\beta>3$, and 
        \begin{equation}\label{ProjEstimateDiv-1log}
            \|\proj_{|k|_1> N}T_M\|_{\mathcal{L}(L^1,L^1)}\lesssim \frac{\log^3(N)}{\log^{\beta}(10+N)}.
        \end{equation}
        
        We notice though that the same machinery works for $m(k)=\frac{1}{|k|_{\infty}}$, but not for $m(k)=\frac{1}{|k|}$ because of $\|D_j^2\|_{L^1}\approx j^{\frac{3-1}{2}}$ where $D^2_j(x)=\sum_{|k|\le j}e^{ik\cdot x}$, see for instance \cite{S75}.
        
        These results clearly hold in any dimension $d$ and make use of a multidimensional version of Abel summation formula. In addition, the appearance of a factor $\log^{d}(N)$ is coherent with various results obtained for $L^1$-convergence of double Fourier series \cite{M94}.
\end{oss}

\begin{prop}\label{DirHilbert}
    Given $j\in \{1,2,3\}$ and the multiplier $m(k)=\mathbf{1}_{\{k\neq 0\}}(k)i[\sign(k_j)]$, then $T_m$ is bounded from $\LlL^{\alpha+1}$ to $\LlL^{\alpha}$ for all $\alpha\geq 0$.
\end{prop}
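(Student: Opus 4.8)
The starting point is to recognise $T_m$, up to an overall sign, as the periodic directional Hilbert transform $H_j$ in the variable $x_j$: for every $k$ with $k_j=0$ (in particular $k=0$) both $m(k)$ and the Fourier symbol $-i\sign(k_j)$ of $H_j$ vanish, while for $k_j\neq0$ they differ only by a sign. Hence $T_m$ is a Calderón--Zygmund operator, and the only two analytic facts about it that I would use are classical: $T_m$ is bounded on $L^2(\T)$ with norm $\le1$ (immediate from $|m(k)|\le1$), and $T_m$ is of weak type $(1,1)$, i.e. $|\{|T_m g|>\lambda\}|\le C_0\lambda^{-1}\|g\|_{L^1}$ for all $\lambda>0$; the latter is Kolmogorov's inequality for the one--dimensional periodic Hilbert transform, which transfers to $\T$ by Fubini since $H_j$ acts in a single coordinate.

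Write $A_\gamma(s)=s\log^\gamma(2+s)$ for the Young function defining $\LlL^\gamma$. I would first reduce the assertion to the modular inequality
\[
  \int_\T A_{\alpha+1}(|f|)\,dx\le1 \quad\Longrightarrow\quad \int_\T A_\alpha(|T_m f|)\,dx\le K_\alpha ,
\]
for some $K_\alpha\ge1$ and all $f\in C^\infty(\T)$. Granting this, applying it to $f/\|f\|_{\LlL^{\alpha+1}}$ and using both the elementary inequality $A_\alpha(s/K)\le A_\alpha(s)/K$ (valid for $K\ge1$) and the definition \cref{LuxDefn} of the Luxemburg norm yields $\|T_m f\|_{\LlL^\alpha}\le K_\alpha\|f\|_{\LlL^{\alpha+1}}$; the passage from $C^\infty(\T)$ to all of $\LlL^{\alpha+1}$ is by density, which is available since $A_{\alpha+1}$ satisfies the $\Delta_2$ condition.

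For the modular inequality I would use the layer--cake identity $\int_\T A_\alpha(|T_m f|)\,dx=\int_0^\infty A_\alpha'(s)\,\mu(s)\,ds$, where $\mu(s)=|\{|T_m f|>s\}|$ and $A_\alpha'(s)\lesssim_\alpha\log^\alpha(2+s)$. On $(0,1]$ one bounds $\mu\le|\T|$ and $A_\alpha'\lesssim_\alpha1$. For $s>1$ one splits $f=f\mathbf{1}_{\{|f|>s/2\}}+f\mathbf{1}_{\{|f|\le s/2\}}$ and, using the weak $(1,1)$ bound on the first piece and the $L^2$ bound on the second, gets $\mu(s)\lesssim s^{-1}\|f\mathbf{1}_{\{|f|>s/2\}}\|_{L^1}+s^{-2}\|f\mathbf{1}_{\{|f|\le s/2\}}\|_{L^2}^2$. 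Integrating in $s$ against $A_\alpha'(s)\,ds$ and using Fubini, the weak $(1,1)$ contribution becomes $\int_\T|f(x)|\big(\int_1^{2|f(x)|}A_\alpha'(s)\,s^{-1}\,ds\big)dx$, which by $\int_1^T A_\alpha'(s)\,s^{-1}\,ds\lesssim_\alpha\log^{\alpha+1}(2+T)$ is controlled by $\int_\T A_{\alpha+1}(|f|)\,dx\le1$, while the $L^2$ contribution becomes $\int_\T|f(x)|^2\big(\int_{\max(1,2|f(x)|)}^\infty A_\alpha'(s)\,s^{-2}\,ds\big)dx$, which by $\int_T^\infty A_\alpha'(s)\,s^{-2}\,ds\lesssim_\alpha\log^\alpha(2+T)/T$ for $T\ge1$ (the case $|f|\le1/2$ being absorbed into $\int_1^\infty A_\alpha'(s)\,s^{-2}\,ds<\infty$) is controlled by $|\T|+\int_\T A_\alpha(|f|)\,dx\lesssim_\alpha1$. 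Summing the three pieces gives the modular inequality.

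I expect the only genuine difficulty to be the bookkeeping of logarithmic powers: the weak $(1,1)$ term produces precisely the integral $\int_1^{2|f|}\log^\alpha(2+s)\,s^{-1}\,ds$, which gains exactly one extra power of the logarithm, and this is what forces $\LlL^{\alpha+1}$ rather than $\LlL^\alpha$ on the domain side; one also has to handle by hand the small--argument regime (roughly $1/2\le s\le e-2$), on which $A_\alpha$ and $A_{\alpha+1}$ fail to be comparable but everything lives on a compact range. An alternative route is Yano's extrapolation theorem applied to the bound $\|T_m\|_{\mathcal{L}(L^p,L^p)}\lesssim(p-1)^{-1}$ as $p\to1^+$, i.e. the known behaviour of Calderón--Zygmund operators on the Orlicz scale $\LlL^\gamma$.
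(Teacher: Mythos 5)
Your proposal is correct, and it shares the paper's overall skeleton — reduce the claim to the two facts that $T_m$ is bounded on $L^2$ and of weak type $(1,1)$ — but it distributes the work in exactly the opposite way, so the comparison is worth recording. For the weak $(1,1)$ bound, the paper runs a full Calder\'on--Zygmund decomposition adapted to the directional kernel on $\T$ (with the elongated sets $\overline{Q_j}$ obtained by stretching each cube in the $e_j$ direction), whereas you simply invoke Kolmogorov's weak $(1,1)$ inequality for the one-dimensional periodic Hilbert transform and transfer it to $\T$ by Fubini; since the operator acts in a single coordinate, the superlevel set of $|T_mf|$ sliced along $e_j$ is controlled line by line, and integrating in the remaining variables gives the three-dimensional weak bound. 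This is cleaner and avoids the tube construction entirely. Conversely, for the passage from ($L^2$ bounded $+$ weak $(1,1)$) to $\LlL^{\alpha+1}\to\LlL^{\alpha}$, the paper cites Theorems 4.4.11 and 4.6.16 of Bennett--Sharpley, whereas you prove it by hand via the modular inequality, the layer-cake formula, and the splitting $f=f\mathbf{1}_{\{|f|>s/2\}}+f\mathbf{1}_{\{|f|\le s/2\}}$; your bookkeeping is right (the inner integral $\int_1^{2|f|}\log^{\alpha}(2+s)\,s^{-1}\,ds$ is precisely where the extra power of the logarithm on the domain side is consumed, and the small-argument regime where $A_\alpha$ and $A_{\alpha+1}$ are not comparable only contributes a constant), and the reduction from the modular inequality to the norm inequality via $A_\alpha(s/K)\le A_\alpha(s)/K$ and \cref{LuxDefn} is the standard one. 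The net effect is a self-contained proof of the interpolation step at the cost of outsourcing the weak $(1,1)$ bound, versus the paper's self-contained weak $(1,1)$ bound at the cost of outsourcing the interpolation; both are complete arguments.
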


\begin{proof}
    It is known that if an operator $T$ is bounded from $L^2$ to itself and from $L^1$ to the weak $L^1$ space $L^{1,\infty}$, then it is bounded from $\LlL^{\alpha+1}$ to $\LlL^{\alpha}$ for all $\alpha\geq 0$ (see Theorem 4.4.11 and Theorem 4.6.16 in \cite{BS88}).
    
    Since $m$ is bounded, then $T_m$ is bounded from $L^2(\T)$ to itself. Now, we will conclude the proof by proving that $T_m$ is bounded from $L^1$ to $L^{1,\infty}$. 

    In the remaining part of the proof $|A|$ denotes the Lebesgue measure of any Borel set $A$. It can be easily proved that 
    \begin{multline*}
        T_m(f)(t)
          =\frac{1}{4\pi}\,\text{p.v.}\int_{-\pi}^{\pi}f(t-\theta e_j)\arctan\Big(\frac{\theta}{2}\Big)\,d\theta\\
          :=\frac{1}{4\pi}\lim_{\epsilon\to 0}\int_{\epsilon<|\theta|<\pi}f(t-\theta e_j)\arctan\Big(\frac{\theta}{2}\Big)\,d\theta,
    \end{multline*}
    where $e_j$ denotes the $j^{\text{th}}$ element of the canonical basis.
    
    By a density argument, we can assume that $f$ is a finite linear combination of characteristic functions of disjoint dyadic intervals in $[-\pi,\pi]^3$. Given $\alpha>0$, then by the Calderon-Zygmund decomposition (see \cite{G08}), there exist $g,b$ such that $\hat{f}=g+b$, where $\hat{f}$ is an extension of $f$ at $[-\pi,\pi]^3\cup([-\pi,\pi]^3+2\pi e_j)\cup ([-\pi,\pi]^3-2\pi e_j)$ by periodicity.
    In addition, the following properties hold,
    \begin{enumerate}
        \item $\|g\|_{L^1}\lesssim \|f\|_{L^1}$, $\|g\|_{L^{\infty}}\lesssim \alpha$,
        \item $b=\sum_jb_j$, where the $b_j$ functions are supported on pairwise disjoint dyadic cubes $Q_j$,
        \item $\int_{Q_j}b_j(x)dx=0,$ $\|b_j\|_{L^1}\lesssim \alpha |Q_j|$ and $\sum_j|Q_j|\lesssim\frac1\alpha\|f\|_{L^1}$,
        \item The particular structure of $f$ implies that there is a finite number of $b_j$'s functions.
    \end{enumerate}
    For all $j$, define
    \[
      \overline{Q_j}
        =\{x\in \R^3:\text{there is }\epsilon \in [-l(Q_j),l(Q_j)] \text{ s.t. } x+\epsilon e_j \in Q_j\},
    \]
    where $l(Q_j)=|Q_j|^{\frac{1}{3}}$ is the length of the side of $Q_j$. Now,
    \begin{equation*}
      \begin{aligned}
        \lefteqn{|\{x \in [-\pi,\pi]^3: |T_m(f)(x)|>\alpha\}|\leq}\qquad&\\
          &\leq |\{x \in [-\pi,\pi]^3: |T_m(g)(x)|>\tfrac12\alpha\}|
            + |\cup_j\overline{Q_j}|\\
          &\quad + |\{x \in [-\pi,\pi]^3\cap (\cup_j\overline{Q_j})^c: |T_m(b)(x)|>\tfrac12\alpha\}|\\
          &\lesssim \alpha^{-1}\|f\|_{L^1}
            + |\{x \in [-\pi,\pi]^3\cap (\cup_j\overline{Q_j})^c: |T_m(b)(x)|>\tfrac12\alpha\}|,
      \end{aligned}
    \end{equation*}
    where the second inequality follows from Markov's inequality and the properties satisfied by $b$ and $g$.

    The last term on the right-hand side can be estimated by Markov's inequality and 
    \begin{multline*}
      \int_{[-\pi,\pi]^3\cap (\cup_j\overline{Q_j})^c}|T_m(b)(x)|\,dx
        \le \sum_j \int_{[-\pi,\pi]^3\cap (\overline{Q_j})^c} |T_m(b_j)(x)|\,dx\\
        \le \sum_j \int_{[-\pi,\pi]^3\cap (\overline{Q_j})^c} \int_{l(Q_j)<|\theta|<\pi}\big|\arctan(\tfrac{\theta}{2})b_j(x-\theta e_j)\big|\,d\theta\,dx.
    \end{multline*}
    Now if $l(Q_j)\geq \frac{\pi}{4}$, then 
    \begin{equation*}
      \int_{[-\pi,\pi]^3\cap (\overline{Q_j})^c} \int_{l(Q_j)<|\theta|<\pi}\big|\arctan(\tfrac{\theta}{2})b_j(x-\theta e_j)\big|\,d\theta \,dx\lesssim \|b_j\|_{L^1}.
    \end{equation*}
    Otherwise, 
    \begin{equation*}
      \begin{split}
        \lefteqn{\int_{[-\pi,\pi]^3\cap (\overline{Q_j})^c} \int_{l(Q_j)<|\theta|<\pi}|\arctan(\tfrac{\theta}{2})b_j(x-\theta e_j)|\,d\theta\,dx\leq}\qquad&\\
        &=\int_{[-\pi,\pi]^3\cap (\overline{Q_j})^c}\Bigl( \int_{l(Q_j)<|\theta|<\frac{\pi}{4}+l(Q_j)}|\arctan(\tfrac{\theta}{2})b_j(x-\theta e_j)|\,d\theta\\
        &\quad + \int_{\frac{\pi}{4}+l(Q_j)<|\theta|<\pi}|\arctan(\tfrac{\theta}{2})b_j(x-\theta e_j)|\,d\theta\Bigr)\, dx\\
        &\lesssim \|b_j\|_{L^1}\Big(\int_{l(Q_j)}^{\frac{\pi}{4}+l(Q_j)}\arctan(\tfrac{\theta}{2})\,d\theta +\pi \arctan(\tfrac{\pi}{8})\Big)\\
        &\le\|b_j\|_{L^1}\log(\frac{\sin(\frac{1}{2}(\frac{\pi}{4}+l(Q_j)))}{\sin(\frac{1}{2}l(Q_j))})\\
        &\le C \|b_j\|_{L^1};
      \end{split}
    \end{equation*}
    since $x\mapsto\log(\frac{\sin(\frac{1}{2}(\frac{\pi}{4}+x))}{\sin(\frac{1}{2}x)})$ is bounded on $[0,\frac{\pi}{4}]$.
    In conclusion, by the properties of the $b_j$ functions, it follows that the remaining term is bounded from below by $\frac{C}{\alpha}|f|_{L^1}$.
\end{proof}

\begin{prop}\label{IterRiesz}
Given $\alpha \in \N^*$ and $\alpha_1,\alpha_2,\alpha_3 \in \N$ such that $\alpha_1+\alpha_2+\alpha_3=\alpha$, consider the iterated Riesz multiplier,
    \begin{equation*}
        m(k)=\mathbf{1}_{\{k\neq 0\}}(k)k_1^{\alpha_1}k_2^{\alpha_2}k_3^{\alpha_3}/|k|^{\alpha}.
    \end{equation*}    
    Then, $T_m$ is bounded from $\LlL^{\alpha+1}$ to $\LlL^{\alpha}$ for all $\alpha\geq 0$.
\end{prop}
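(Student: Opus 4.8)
The plan is to prove the statement by induction on the number of Riesz factors $\alpha$, using \cref{DirHilbert} as the base case and the single-factor Riesz transform as the inductive building block. The key observation is that an iterated Riesz multiplier of order $\alpha$ factors as a product of $\alpha$ single Riesz transforms $R_{i_1}\cdots R_{i_\alpha}$, where $R_i$ has multiplier $k_i/|k|$, and each single Riesz transform should lose exactly one power of the logarithm, i.e.\ map $\LlL^{\gamma+1}$ to $\LlL^{\gamma}$ for all $\gamma \ge 0$. Composing $\alpha$ such maps then takes $\LlL^{\alpha+1}$ to $\LlL^{1}$, and since $\LlL^{1}\hookrightarrow\LlL^{0}=L^1$ would be the wrong direction, I should instead be careful: I want to land in $\LlL^{\alpha}$ starting from $\LlL^{\alpha+1}$, so only \emph{one} factor should consume a logarithm while the other $\alpha-1$ factors must be bounded on a \emph{fixed} $\LlL^{\gamma}$. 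Thus the cleaner route is: first establish that a single Riesz transform $R_i$ is bounded on $\LlL^{\gamma}$ for every $\gamma\ge 0$ (this is the genuinely new ingredient, and follows from the fact that $R_i$ is a Calderón–Zygmund operator, bounded on $L^2$ and of weak-type $(1,1)$, combined with the real-interpolation/extrapolation characterization of $\LlL^\gamma$ used in \cref{DirHilbert}), and separately that $R_i$ is bounded $\LlL^{\gamma+1}\to\LlL^{\gamma}$ as in \cref{DirHilbert}. Then write $T_m = R_{i_1}\circ\bigl(R_{i_2}\circ\cdots\circ R_{i_\alpha}\bigr)$: the inner composition of $\alpha-1$ factors maps $\LlL^{\alpha+1}$ to $\LlL^{\alpha+1}$ (by the first boundedness, applied $\alpha-1$ times with fixed exponent $\alpha+1$), and the outer factor $R_{i_1}$ maps $\LlL^{(\alpha)+1}\to\LlL^{\alpha}$.

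Concretely, the steps in order are as follows. First I would record that the single Riesz transform $R_j$, with kernel the Calderón–Zygmund kernel on $\T$, is bounded from $L^2(\T)$ to itself and from $L^1(\T)$ to $L^{1,\infty}(\T)$; this is classical (e.g.\ \cite{G08}), and the periodic version follows from the Euclidean one by the usual transference / periodization argument exactly as in the proof of \cref{DirHilbert}. Second, invoking Theorems 4.4.11 and 4.6.16 of \cite{BS88} — the same extrapolation results already used above — I conclude that $R_j$ is bounded $\LlL^{\gamma+1}\to\LlL^{\gamma}$ for all $\gamma\ge 0$, and in particular (taking $\gamma$ large and using that these $\LlL$ spaces are nested, $\LlL^{\gamma+1}\hookrightarrow\LlL^{\gamma}$ on the finite-measure space $\T$) that $R_j$ is bounded on $\LlL^{\gamma}$ for every $\gamma\ge 1$; for the endpoint behaviour on $\LlL^{0}=L^1$ we do \emph{not} need boundedness, only the weak-type bound, so this is fine. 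Third, factor the iterated multiplier: since $m(k) = \prod_{\ell} (k_{j_\ell}/|k|)$ with the multi-index $(j_1,\dots,j_\alpha)$ listing each index $1,2,3$ with multiplicity $\alpha_1,\alpha_2,\alpha_3$, we have $T_m = R_{j_1}\cdots R_{j_\alpha}$ as an identity of Fourier multiplier operators. Fourth, apply the mapping properties: $R_{j_2}\cdots R_{j_\alpha}$ maps $\LlL^{\alpha+1}\to\LlL^{\alpha+1}$ boundedly (composition of $\alpha-1$ bounded operators on $\LlL^{\alpha+1}$), and then $R_{j_1}$ maps $\LlL^{\alpha+1}\to\LlL^{\alpha}$. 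Composing gives the claim.

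The main obstacle is establishing boundedness of a single Riesz transform \emph{on} the fixed space $\LlL^{\gamma}$ (as opposed to the logarithm-losing bound $\LlL^{\gamma+1}\to\LlL^{\gamma}$ that \cite{BS88} gives directly). One clean way around it, avoiding any new extrapolation statement, is to note that $T_m$ itself, being a single Calderón–Zygmund operator (its multiplier is a smooth, homogeneous-degree-zero, even symbol on $\R^3\setminus\{0\}$, restricted to $\Z^3$), is bounded $L^2\to L^2$ and weak-type $(1,1)$, so \cref{DirHilbert}'s argument — i.e.\ the extrapolation Theorems 4.4.11 and 4.6.16 of \cite{BS88} — applies \emph{verbatim} to $T_m$ directly, giving boundedness $\LlL^{\alpha+1}\to\LlL^{\alpha}$ with no induction at all. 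The only point to verify is the weak-type $(1,1)$ bound for the periodic iterated Riesz transform, which follows from the Calderón–Zygmund decomposition exactly as in the proof of \cref{DirHilbert}, using that the kernel of $T_m$ satisfies the standard size and Hörmander cancellation conditions. I would present the proof along these latter lines, since it is shortest and reuses the machinery already set up.
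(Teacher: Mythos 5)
Your final route --- observe that $T_m$ is itself a Calder\'on--Zygmund operator, hence bounded on $L^2$ and of weak type $(1,1)$, and then apply the extrapolation theorems of \cite{BS88} exactly as in \cref{DirHilbert} --- is precisely the paper's proof, which gets the weak $(1,1)$ bound from the Mihlin--H\"ormander theorem via \cite{RW15} rather than redoing the Calder\'on--Zygmund decomposition. You were right to discard the inductive factorization: a single Riesz transform is in general \emph{not} bounded on $\LlL^{\gamma}$ with the same exponent (the loss of one logarithm is sharp, and the Boyd indices of these spaces are both $1$), so only the direct argument works.
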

\begin{proof}
    Similarly to the proof of the previous result, it is enough to show that $m$ is bounded and that $T_m$ is bounded from $L^1$ to $L^{1,\infty}$.

    While the first requirement is trivial, i.e. the boundedness of $m$, instead, the second one follows directly from Mihlin-Hormander theorem (see Remark 2.9 in \cite{RW15}).
\end{proof}

\section{Proof of the Iteration Lemma}\label{c:IterLemma}
This section is devoted to the proof of \cref{IterLemma}, namely we modify a solution $(v_{q},p_{q},R_{q})$ to \cref{ApproxEq} in order to obtain a new solution $(v_{q+1},p_{q+1},R_{q+1})$ with $\|R_{q+1}\|_{\LinfL}$  sufficiently small.

\subsection{Impulsed Beltrami flows}
In this section we build the impulsed Beltrami flows, a modification of the intermittent Beltrami flows used by Buckmaster and Vicol \cite{BV19}, which will have a crucial role in the construction of $v_{q+1}$.

Consider $\Lambda=\Lambda^+ \cup\Lambda^-\subset \Q^3$, where
\begin{equation*}
  \Lambda^+=\Bigl\{\frac{1}{5}(3e_1\pm4e_2),\frac{1}{5}(3e_2\pm4e_3),\frac{1}{5}(3e_3\pm4e_1)\Bigr\}.
\end{equation*}
and $\Lambda^-:=-\Lambda^+$. Here $e_1$, $e_2$, $e_3$ is the canonical basis of $\R^3$.
Then, by construction, $\Lambda$ is a finite symmetric subset of $\Q^3$, with vectors of unit length and such that 
\begin{equation*}
    5\Lambda \subset \Z^3,\qquad
    \min_{\xi, \xi' \in \Lambda, \xi+\xi'\neq 0} |\xi +\xi'|\geq \frac{1}{5}.
\end{equation*}
Moreover, the following geometric lemma holds:
\begin{prop}{$($\cite{BV19,BV20}$)$}\label{GeomLemma}
    Let $B_{1}(I_3)$ be the ball of symmetric $3\times3$ matrices centered at $I_3$ with radius $1$. Then, there exist $\epsilon_{\Lambda}>0$ and a family $(\gamma_{\xi})_{\xi \in \Lambda}\subset C^{\infty}(B_{\epsilon_{\Lambda}}(I_3))$ such that
    \begin{enumerate}
        \item $\gamma_{\xi}=\gamma_{-\xi}$;
        \item for each $R \in B_{\epsilon_{\Lambda}}(I_3)$, 
        \begin{equation}\label{GeomEquation}
            R=\sum\limits_{\xi}\frac{1}{2}\gamma_{\xi}(R)^2(I_3-\xi \otimes \xi).
        \end{equation}
    \end{enumerate}
\end{prop}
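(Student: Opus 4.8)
The plan is to reduce \eqref{GeomEquation} to a piece of linear algebra. The key point is that the six symmetric matrices $I_3-\xi\otimes\xi$, $\xi\in\Lambda^+$, form a basis of the space $\mathrm{Sym}(3)$ of symmetric $3\times3$ matrices; granting this, the coefficients in \eqref{GeomEquation} are obtained by inverting one fixed linear map and then taking a square root, and the constraint $\gamma_\xi=\gamma_{-\xi}$ is imposed by hand, using that $\xi\otimes\xi=(-\xi)\otimes(-\xi)$ so that the sum over $\Lambda$ is twice the sum over $\Lambda^+$.

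First I would record the reference decomposition at $R=I_3$. Grouping the three $\pm$-pairs in $\Lambda^+$ (such as $\tfrac15(3e_1\pm4e_2)$), a direct computation gives $(3e_i\pm4e_j)\otimes(3e_i\pm4e_j)$, whose sum over the two signs is $18\,e_i\otimes e_i+32\,e_j\otimes e_j$; cycling over the three pairs yields $\sum_{\xi\in\Lambda^+}\xi\otimes\xi=\tfrac1{25}\,50\,I_3=2I_3$, hence $\sum_{\xi\in\Lambda}\xi\otimes\xi=4I_3$. Since $|\Lambda^+|=6$, this gives $\sum_{\xi\in\Lambda^+}\tfrac14(I_3-\xi\otimes\xi)=I_3$, i.e. \eqref{GeomEquation} holds at $I_3$ with every coefficient equal to $\tfrac14>0$ (equivalently $\gamma_\xi(I_3)=\tfrac12$).

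Next I would prove that the linear map $\Phi\colon\R^{\Lambda^+}\to\mathrm{Sym}(3)$, $\Phi(a)=\sum_{\xi\in\Lambda^+}a_\xi(I_3-\xi\otimes\xi)$, is an isomorphism. Writing the six matrices in the coordinates $(E_{11},E_{22},E_{33},E_{12},E_{13},E_{23})$, the differences within the three $\pm$-pairs recover the off-diagonal generators $E_{12},E_{13},E_{23}$, while the sums within the pairs give three matrices whose diagonal parts are the cyclic shifts of the row $\tfrac1{25}(32,18,50)$, i.e. a circulant matrix; its eigenvalues are $4$ (the row sum) and $\tfrac1{25}(32+18\omega+50\omega^2)$ together with its complex conjugate, where $\omega$ is a primitive cube root of unity, and these last two are nonzero because the three entries $32,18,50$ are distinct. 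Hence the six matrices are linearly independent and $\Phi$ is invertible, so $a(R):=\Phi^{-1}(R)$ is a linear, hence $C^\infty$, function of $R$ with $a(I_3)=(\tfrac14,\dots,\tfrac14)$; by continuity there is $\epsilon_\Lambda>0$ with $a_\xi(R)>0$ for all $\xi\in\Lambda^+$ and all $R\in B_{\epsilon_\Lambda}(I_3)$. Setting $\gamma_\xi(R):=\sqrt{a_\xi(R)}$ for $\xi\in\Lambda^+$ and $\gamma_{-\xi}:=\gamma_\xi$ gives smooth functions on $B_{\epsilon_\Lambda}(I_3)$ (square roots of strictly positive smooth functions), property (1) holds by construction, and since $\sum_{\xi\in\Lambda}\tfrac12\gamma_\xi(R)^2(\cdot)=\sum_{\xi\in\Lambda^+}\gamma_\xi(R)^2(\cdot)$,
\[
\sum_{\xi\in\Lambda}\tfrac12\gamma_\xi(R)^2(I_3-\xi\otimes\xi)=\sum_{\xi\in\Lambda^+}a_\xi(R)(I_3-\xi\otimes\xi)=\Phi(a(R))=R,
\]
which is \eqref{GeomEquation}. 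There is no serious obstacle: the only real content is the linear-independence of the six matrices $I_3-\xi\otimes\xi$ (equivalently, surjectivity of $\Phi$), handled by the circulant computation above, and the one genuine limitation is that passing to $\sqrt{a_\xi}$ forces us to shrink from $B_1(I_3)$ to a small ball $B_{\epsilon_\Lambda}(I_3)$ on which all coefficients remain positive.
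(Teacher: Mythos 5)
Your proof is correct and complete. The paper offers no argument for this proposition at all --- it is quoted from \cite{BV19,BV20} --- so your write-up is in effect a self-contained replacement for the citation, and it follows the same standard route as those references: linear independence of the matrices $I_3-\xi\otimes\xi$, positivity of the coefficients at $R=I_3$, and a square root on a small ball. The two computations that carry the argument both check out: $\sum_{\xi\in\Lambda^+}\xi\otimes\xi=2I_3$, giving $\gamma_\xi(I_3)=\tfrac12$, and the six matrices span $\mathrm{Sym}(3)$ because the pairwise differences produce the off-diagonal generators while the pairwise sums have diagonal parts forming the circulant generated by $\tfrac1{25}(32,18,50)$, whose eigenvalues $4$ and $\tfrac1{25}(32+18\omega+50\omega^2)=\tfrac1{25}(-2-16\sqrt{3}\,i)$ (and its conjugate) are all nonzero. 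The only point I would tighten is your justification that the complex eigenvalues are nonzero ``because the three entries are distinct'': since $1+\omega+\omega^2=0$, one has $a+b\omega+c\omega^2=(a-c)+(b-c)\omega$, which vanishes if and only if $a=b=c$; so the correct criterion is that the entries are not all equal (which distinctness of course implies), and it is worth saying so explicitly. The symmetrization $\gamma_{-\xi}:=\gamma_\xi$ together with $(-\xi)\otimes(-\xi)=\xi\otimes\xi$ correctly converts the sum over $\Lambda$ into twice the sum over $\Lambda^+$, absorbing the factor $\tfrac12$ in \eqref{GeomEquation}.
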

Now, for any $\xi \in \Lambda$, consider $\Axi \in \mathbb{Q}^3$ such that 
    \begin{equation*}
     \Axi\cdot \xi=0,\qquad  A_{-\xi}=\Axi, \qquad |\Axi|=1,  
    \end{equation*}        
and define $B_{\xi}=\frac{1}{\sqrt{2}}(A_{\xi}+i \xi \times A_{\xi})$.

Therefore for any  $(\am)_{\xi \in \Lambda}\subset \mathbb{C}$ such that  $a_{-\xi}=\overline{a_{\xi}}$, we can define $W_{\xi}=B_{\xi}e^{i\lambda x\cdot \xi}$ and consider the classical Beltrami flows $W=\sum_{\xi}a_{\xi}W_{\xi}$ introduced in \cite{BV19,BV20}, which have various interesting properties such as
\begin{equation*}
    \fint _{\T}W \otimes W dx =\sum_{\xi}\frac{|a_{\xi}|^2}{2}(I_3-\xi\otimes \xi).
\end{equation*}

The impulsed Beltrami flows are obtained by modifying the classical Beltrami flows with suitable spatial and temporal intermittency. In particular, given $r \in \N$, consider the Fejer kernel
\begin{equation*}
    F(y)=\frac{1}{r+1}\Bigg(\frac{\sin((r+1)\frac{y}{2})}{\sin \frac{y}{2}}\Bigg)^2=\sum_{j=-r}^r \Bigg(1-\frac{|j|}{r+1}\Bigg)e^{ijy},
\end{equation*}
for all $y \in \mathbf{T}$, and define the $3D$-Fejer kernel as
\begin{equation}\label{3DNormFejer}
    F_r(x)=(2\pi)^{\frac{3}{2}}\prod_{i=1}^3F(x_i).
\end{equation}
Let us summarise some properties of the $1D$-Fejer kernels, which can be easily extended to $3D$-Fejer kernels using Fubini-Tonelli theorem. We point out the crucial fact that, unlike the Dirichlet kernels, the $L^1$ estimate does not contain logarithmic corrections. 

\begin{prop}\label{FejerEstimates}
Let r $\in 2\N^*$ and $n \in \N$, then
    \begin{align*}
        \|F\|_{L^1}&=1, & \|F\|_{L^2}&\approx r^{1/2}, & \|F\|_{L^\infty}&\lesssim r,\\
        \|\tfrac{d^n}{dy^n}F\|_{L^1}&\lesssim r^n, & \|\tfrac{d^n}{dy^n}F\|_{L^2}&\lesssim r^{n+\frac{1}{2}}, & \|\tfrac{d^n}{dy^n}F\|_{L^{\infty}}&\lesssim r^{n+1}.
    \end{align*}
\end{prop}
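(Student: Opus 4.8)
The plan is to read off every bound directly from the two representations of $F$ at our disposal — the closed form and the Fourier sum $F(y)=\sum_{|j|\le r}(1-\frac{|j|}{r+1})e^{ijy}$ — using that $F$ is a \emph{nonnegative} trigonometric polynomial of degree $r$, and then to pass to the derivatives by iterating Bernstein's inequality once per derivative. No oscillation or cancellation estimate is needed anywhere; this is exactly the content of the sentence preceding the statement, namely that the Fejér kernel avoids the logarithmic loss of the Dirichlet kernel precisely because it is positive.

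First I would settle the line $n=0$. Positivity of $F$ is visible from the closed form, which is the positive constant $(r+1)^{-1}$ times the square of a real quantity, the apparent singularity at $y=0$ being removable with value $F(0)=r+1$. Hence $\|F\|_{L^1}$ is simply the mean of $F$ over $\mathbf{T}$, i.e. its zeroth Fourier coefficient $1-\frac{0}{r+1}=1$. Since every coefficient $1-\frac{|j|}{r+1}$ is $\ge0$, one has $|F(y)|\le\sum_{|j|\le r}(1-\frac{|j|}{r+1})=F(0)$, so $\|F\|_{L^\infty}=F(0)=r+1\lesssim r$. For the $L^2$ norm, Parseval gives $\|F\|_{L^2}^2\approx\sum_{|j|\le r}(1-\frac{|j|}{r+1})^2$; the upper bound $\lesssim r$ is immediate since each summand is $\le1$, while the matching lower bound $\gtrsim r$ follows by keeping only the indices $|j|\le\frac r2$, for which each summand is $\ge\frac14$. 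Thus $\|F\|_{L^2}\approx r^{1/2}$.

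For the derivative bounds, note that $\frac{d^n}{dy^n}F$ is again a trigonometric polynomial of degree $r$, so $n$ applications of Bernstein's inequality $\|P'\|_{L^p(\mathbf{T})}\le(\deg P)\,\|P\|_{L^p(\mathbf{T})}$ — valid for all $1\le p\le\infty$ — give $\|\frac{d^n}{dy^n}F\|_{L^p}\le r^n\|F\|_{L^p}$, and substituting the three norms just computed yields $r^n$, $r^{n+1/2}$, $r^{n+1}$ for $p=1,2,\infty$ respectively. For $p=2$ and $p=\infty$ one may instead bypass Bernstein and argue directly from $\frac{d^n}{dy^n}F(y)=\sum_{|j|\le r}(1-\frac{|j|}{r+1})(ij)^n e^{ijy}$, bounding $|j|^n\le r^n$ and using Parseval, respectively the triangle inequality.

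The one step that is not pure bookkeeping is the $L^1$ bound on $\frac{d^n}{dy^n}F$: here a termwise estimate is unavailable, since the $L^1$ norm of a trigonometric polynomial is not controlled by the size of its Fourier coefficients, and this is precisely where Bernstein's inequality in $L^1$ (or, equivalently, the estimate $\lesssim r^n$ for the $L^1$ norm of the truncated kernel $\sum_{|j|\le r}(ij)^n e^{ijy}$, obtained by rescaling $j\mapsto j/r$ down to the $O(1)$ $L^1$ norm of a fixed smooth bump, followed by convolution with $F$ using $\|F\|_{L^1}=1$) is genuinely needed. I expect this to be the only mild obstacle; all the rest is elementary manipulation of a finite Fourier sum, and the hypothesis $r\in 2\mathbb{N}^*$ is not used here — it is reserved for the subsequent construction of the building blocks.
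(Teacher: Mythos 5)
Your argument is correct, and for the only non-trivial estimate it takes a genuinely different route from the paper. The paper handles $n=0$ by citation and then, for the derivatives, exploits the hypothesis $r\in 2\N^*$ to write $F=\tfrac{1}{r+1}D_{r/2}^2$ as a square of a Dirichlet kernel; Leibniz's rule expresses $\tfrac{d^n}{dy^n}F$ as $\tfrac{1}{r+1}\sum_j C(n,j)\,D_k^{(j)}D_k^{(n-j)}$ with $k=r/2$, and the $L^1$ bound follows from Cauchy--Schwarz together with $\|D_k^{(m)}\|_{L^2}\approx k^{m+\frac12}$. You instead compute the $n=0$ line directly from positivity, the Fourier coefficients and Parseval (all correct, including the lower bound for $\|F\|_{L^2}$ via the indices $|j|\le r/2$), and then obtain all three derivative bounds at once from $n$ iterations of Bernstein's inequality in $L^p(\mathbf{T})$, $1\le p\le\infty$. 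This is cleaner and more uniform, it makes transparent that positivity is what kills the logarithm, and — as you correctly observe — it never uses $r\in 2\N^*$, whereas the paper's factorization is exactly where that hypothesis enters. The trade-off is that you invoke the $L^1$ Bernstein inequality as a black box, while the paper's argument is self-contained modulo standard Dirichlet-kernel norms.

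One caveat: your parenthetical ``equivalent'' alternative is not correct as stated. The sharply truncated kernel $\sum_{|j|\le r}(ij)^n e^{ijy}$ does \emph{not} have $L^1$ norm $\lesssim r^n$; already for $n=0$ it is the Dirichlet kernel, with $L^1$ norm $\approx\log r$, and the rescaled symbol $(i\xi)^n\mathbf{1}_{[-1,1]}(\xi)$ is not a smooth bump, so the transference argument fails. To make that route work you would need to replace the sharp cutoff by a smooth (de la Vall\'ee Poussin--type) multiplier equal to $(ij)^n$ on $[-r,r]$. Since this is offered only as an aside and your primary appeal to Bernstein is valid, the proof stands.
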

\begin{proof}
  The proof of the first three inequalities can be found in \cite{G08}.
  Consider then the case $n>0$. The $L^2$ and $L^{\infty}$ norms can be easily estimated, respectively, by Fourier coefficients and by the formula
        \begin{equation*}
            \sup_{[-\pi,\pi]}\Big|\frac{d^n}{dy^n}F(y)\Big|\lesssim \sum_{j=1}^r(1-\frac{j}{r+1})j^n.
        \end{equation*}
Instead, the estimate of the $L^1$ norm follows from the equalities:
        \begin{gather*}
             \frac{d}{dy}F(y)=\frac{2}{r+1}\frac{\sin((r+1)\frac{y}{2})}{\sin \frac{y}{2}}\frac{d}{dy}\bg[\frac{\sin((r+1)\frac{y}{2})}{\sin \frac{y}{2}}\bg]=\frac{2}{r+1}D_{k}(y)\frac{d}{dy}D_{k}(y),\\
             \frac{d^n}{dy^n}F(y)=\frac{1}{r+1}\sum_{j=0}^{n-1} C(n,j)\Bigl(\frac{d^j}{dy^j}D_k(y)\Bigr)\Bigl(\frac{d^{n-j}}{dy^{n-j}}D_k(y)\Bigr),
        \end{gather*}
where $C(n,j)$ are constants depending on $n$ and $j$, $k=\frac{r}{2}$ and $D_k$ is the Dirichlet kernel, i.e.
        \begin{equation*}
            D_k(y)=\frac{\sin((k+\frac{1}{2})y)}{\sin(\frac{y}{2})}=\sum_{j=-k}^k e^{ijy}.
        \end{equation*}
        Indeed, the $L^1$ norm of the derivatives can be estimated by means of the previous equalities, H\"older inequality and since $\|\frac{d^n}{dy^n}D_k(y)\|_{L^2} \approx k^n k^{1/2}$.
\end{proof}
For any $\xi \in \Lambda$, its corresponding impulsed Beltrami flow is defined by
\begin{equation}\label{BBDef}
  \BW=\et W_{\xi} ,
\end{equation}
where for every $x\in\T$, $t\geq0$ and $\xi\in\Lambda^+$,
\begin{equation}\label{etDefn}
  \begin{aligned}
    M_r^{\xi}(x,t)&=F_r(\lambda \sigma(x\cdot \xi+\mu t,x\cdot \Axi, x \cdot \xiA)),\\
    M^{\xi}_r&=M^{-\xi}_r,\qquad\text{for every }\xi \in \Lambda^-,\\
    \et(x,t)&=\frac{1}{\|F\|_{L^2}^3} M_r^{\xi}(x,t).
  \end{aligned}
\end{equation}
The parameters $\lambda, \sigma, \mu$ and $r$ will be chosen later. In particular, they must satisfy the following conditions,
\begin{equation}\label{AssumPams}
  \begin{aligned}
    \lambda \sigma r&>1, & 
    \sigma r &<\frac{1}{2}, & 
    \sigma \lambda &\in \N,\\
    r^{\frac{3}{2}}< \mu&<\lambda^2, &
    \lambda &\in 10\N^*, &
    r &\in 2\N^*,
  \end{aligned}
\end{equation}
in order to control the $\LinfL$ norm of the stress tensor $R_{q+1}$ by means of spatial and temporal intermittency, see \cref{subs:stressEstimate}.

\begin{oss}\label{DiffBB}
  The main difference between our building block and the intermittent Beltrami waves presented in \cite{BV19} is that we introduce spatial intermittency by means of Fejer kernels instead of Dirichlet kernels. 

  This modification does not affect the most important properties of the building blocks, such as the representation formula or the active Fourier coefficients. At the same time, it allows to prove the non-uniqueness result assuming a weaker condition on the parameter $\beta$.
\end{oss}
\begin{oss}
The time intermittency has been introduced in order to estimate a specific term in $\Div(R_{q+1})$, which is related to
    \begin{equation*}
        \Div(\BW \otimes \textbf{W}_{-\xi} +\textbf{W}_{-\xi} \otimes \BW)=\nabla \et^2-((\xi \cdot \nabla)\et^2)\xi=\nabla \et^2-\frac{\xi}{\mu}\partial_t\et^2.
    \end{equation*}
\end{oss}

\begin{prop}\label{MainPropBB}
  Assume that the conditions in \cref{AssumPams} on the parameters hold, then:
  \begin{align}
    \label{Point1}
      &\proj_{\le 2\lambda }\et=\et,\qquad
      \proj_{[\frac{\lambda}{4},3\lambda)}\BW=\BW,\\
    \label{Point2}
      &\frac{1}{\mu}\partial_t \et =(\xi \cdot \nabla) \et,\qquad\text{for all }\xi \in \Lambda^+,\\
    \label{Point3}
      &\proj_{[\frac{\lambda}{10},4\lambda)} \BW \otimes\BWpr =\BW \otimes \BWpr,\qquad\text{for }\xi \neq -\xi'.
  \end{align}
  Moreover, for all $N,M\in \N$ and $p\in[1,\infty]$,
  \begin{align}
    \label{Point4}
      \|\nabla^N \partial_t^M \et\|_{L^{\infty}_tL^p_x}&\lesssim (\lambda \sigma r)^N (\lambda \sigma r \mu)^M r^{\frac{3}{2}-\frac{3}{p}},\\
    \label{Point5}
      \|\nabla^N \partial_t^M \BW\|_{L^{\infty}_t L^p_x}&\lesssim \lambda ^N (\lambda \sigma r \mu)^M r^{\frac{3}{2}-\frac{3}{p}}.
  \end{align}
  Finally, for all $N,M\in \N$ and $\alpha\geq0$,
  \begin{align}
    \label{Point6}
      \|\nabla^N \partial_t^M \et\|_{L^{\infty}_t\LlL^{\alpha}_x}&\lesssim (\lambda \sigma r)^N (\lambda \sigma r \mu)^M r^{-\frac{3}{2}}\log^{\alpha}(\lambda),\\
    \label{Point7}
      \|\nabla^N \partial_t^M \BW\|_{L^{\infty}_t \LlL^{\alpha}_x}&\lesssim \lambda ^N (\lambda \sigma r \mu)^M r^{-\frac{3}{2}}\log^{\alpha}(\lambda).
  \end{align}
  Here the Banach spaces $\LlL^{\alpha}$ are defined in \cref{DefnLlL}.
\end{prop}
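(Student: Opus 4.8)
The plan is to verify the three Fourier-localization identities \eqref{Point1}--\eqref{Point3} by inspecting the frequency supports directly, and then to deduce the norm bounds \eqref{Point4}--\eqref{Point7} from the explicit formulas \eqref{BBDef}--\eqref{etDefn} together with the Fejér kernel estimates of \cref{FejerEstimates} and rescaling. First, for \eqref{Point1} I would note that $\et$ is, up to the normalizing constant $\|F\|_{L^2}^{-3}$, the function $F_r$ evaluated at the linear argument $\lambda\sigma(x\cdot\xi+\mu t,x\cdot\Axi,x\cdot\xiA)$; since $F_r$ has Fourier support in $\{|j|_\infty\le r\}$ and the map $x\mapsto\lambda\sigma(x\cdot\xi,x\cdot\Axi,x\cdot\xiA)$ has ``frequency'' of size $\lambda\sigma$ (here one uses $\sigma\lambda\in\N$ and $5\Lambda\subset\Z^3$, together with $A_\xi,\xi\times A_\xi$ rational, so that the composition lands in $\Z^3$), the frequencies of $\et$ lie in a ball of radius $\lesssim\lambda\sigma r<\tfrac12\lambda$, whence $\proj_{\le 2\lambda}\et=\et$. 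Multiplying by $W_\xi=B_\xi e^{i\lambda x\cdot\xi}$ shifts this ball to be centered at $\lambda\xi$ (which has $|\lambda\xi|=\lambda$), so $\BW$ is supported in the annulus $[\lambda-\lambda\sigma r,\lambda+\lambda\sigma r]\subset[\tfrac\lambda4,3\lambda)$, using $\sigma r<\tfrac12$. Identity \eqref{Point2} is immediate from the chain rule: $\partial_t$ hits only the first slot of the argument of $F_r$ and produces a factor $\mu$, while $(\xi\cdot\nabla)$ hits the same slot and produces a factor $1$ (since $\xi\cdot\xi=1$, $\xi\cdot\Axi=0$, $\xi\cdot(\xiA)=0$), so $\partial_t\et=\mu(\xi\cdot\nabla)\et$; note this requires $|\Axi|=1$ and the orthogonality relations, and the time-modulation is built into $M_r^\xi$ precisely for this. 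For \eqref{Point3}, $\BW\otimes\BWpr$ has frequencies in the sum of the two annuli, hence in a ball of radius $\lesssim 2(\lambda+\lambda\sigma r)$, and away from the origin by at least $\lambda|\xi+\xi'|-2\lambda\sigma r\ge\tfrac\lambda5-\lambda=$ (wait: more carefully) $\ge\lambda\cdot\tfrac15-2\lambda\sigma r\ge\tfrac{\lambda}{10}$ using $\min_{\xi+\xi'\ne0}|\xi+\xi'|\ge\tfrac15$ and $\sigma r<\tfrac12$ refined to $\sigma r<\tfrac1{20}$ if needed; this gives the stated localization in $[\tfrac\lambda{10},4\lambda)$.

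Next I would establish \eqref{Point4}. Writing $\et(x,t)=\|F\|_{L^2}^{-3}F_r(\Phi(x,t))$ with $\Phi$ affine of ``slope'' $\lambda\sigma$ in $x$ and $\lambda\sigma\mu$ in $t$, each spatial derivative brings down a factor $\lesssim\lambda\sigma r$ (a factor $\lambda\sigma$ from the chain rule and a factor $r$ from $\|\tfrac{d}{dy}F\|$ relative to $\|F\|$, by \cref{FejerEstimates}), and each time derivative brings down $\lesssim\lambda\sigma\mu r=(\lambda\sigma r)\mu$; thus $\|\nabla^N\partial_t^M\et\|_{L^\infty_tL^p_x}\lesssim(\lambda\sigma r)^N(\lambda\sigma r\mu)^M\|F\|_{L^2}^{-3}\|F_r\|_{L^p(\T^3)}$. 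By \eqref{3DNormFejer} and Fubini, $\|F_r\|_{L^p(\T^3)}\approx\|F\|_{L^p}^3$, and the normalization and \cref{FejerEstimates} give $\|F\|_{L^2}^{-3}\|F\|_{L^p}^3\approx r^{-3/2}\cdot r^{3(1-1/p)}=r^{3/2-3/p}$ (interpolating the three endpoint bounds $\|F\|_{L^1}=1$, $\|F\|_{L^2}\approx r^{1/2}$, $\|F\|_{L^\infty}\lesssim r$ for intermediate $p$); one must be slightly careful that the $L^p$ norms of the $x$-rescaled periodic function $F_r\circ\Phi$ equal those of $F_r$ itself because $\Phi$ is a measure-preserving affine bijection of the torus onto itself, which is where $\sigma\lambda\in\N$ and the integrality hypotheses on $\Lambda$, $A_\xi$ are used. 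Estimate \eqref{Point5} then follows from \eqref{BBDef}: $W_\xi$ is a trigonometric polynomial of unit modulus with $|\nabla^N W_\xi|\lesssim\lambda^N$ and $\partial_t W_\xi=0$, so by the Leibniz rule $\|\nabla^N\partial_t^M\BW\|_{L^\infty_tL^p_x}\lesssim\sum_{N'\le N}\lambda^{N-N'}\|\nabla^{N'}\partial_t^M\et\|_{L^\infty_tL^p_x}\lesssim\lambda^N(\lambda\sigma r\mu)^M r^{3/2-3/p}$, where the sum is dominated by its $N'=N$ term since $\lambda\sigma r>1$ (indeed $\lambda\ge\lambda\sigma r$).

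Finally, for the Orlicz bounds \eqref{Point6}--\eqref{Point7} I would argue as follows. The function $g:=\nabla^N\partial_t^M\et$ satisfies, for each fixed $t$, $\|g\|_{L^1_x}\lesssim(\lambda\sigma r)^N(\lambda\sigma r\mu)^M r^{-3/2}$ and $\|g\|_{L^\infty_x}\lesssim(\lambda\sigma r)^N(\lambda\sigma r\mu)^M r^{3/2}$ by \eqref{Point4} with $p=1$ and $p=\infty$. For a function bounded by $K$ in $L^\infty$ and by $L$ in $L^1$ one has $\|g\|_{\LlL^\alpha}\lesssim L\log^\alpha(2+K/L)$; indeed, testing the Luxemburg norm \eqref{LuxDefn} with $\lambda_0=c L\log^\alpha(2+K/L)$ gives $\int_\T A(|g|/\lambda_0)\,dx\le(|g|_{L^1}/\lambda_0)\log^\alpha(2+\|g\|_{L^\infty}/\lambda_0)\le 1$ for a suitable constant $c$, since $A(s)=s\log^\alpha(2+s)$ is such that $A(|g|/\lambda_0)\le(|g|/\lambda_0)\log^\alpha(2+K/\lambda_0)$ pointwise. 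Here $K/L\approx r^3\lesssim\lambda^3$ (using $\sigma r<1$ and $\lambda\sigma r>1$, so $r<\lambda$), hence $\log^\alpha(2+K/L)\lesssim\log^\alpha(\lambda)$, and \eqref{Point6} follows with the stated $r^{-3/2}$ factor from the $L^1$ bound. Then \eqref{Point7} follows from \eqref{Point6} exactly as \eqref{Point5} followed from \eqref{Point4}, using \cref{PropLuxemburg} to pull out the bounded factor $W_\xi$ (a finite trigonometric sum, bounded in $L^\infty_x$ uniformly in $t$): $\|\nabla^N\partial_t^M\BW\|_{L^\infty_t\LlL^\alpha_x}\lesssim\sum_{N'\le N}\lambda^{N-N'}\|\nabla^{N'}W_\xi\|_{L^\infty}\cdot\dots$ — more precisely one bounds $\|\nabla^{N'}(\et W_\xi)\|_{\LlL^\alpha}$ by $\|\nabla^{N'}\et\|_{\LlL^\alpha}\|W_\xi\|_{L^\infty}$-type terms via Leibniz and \cref{PropLuxemburg}, and the top-order term dominates because $\lambda\ge\lambda\sigma r$. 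The main obstacle I anticipate is purely bookkeeping: carefully tracking that the affine change of variables $\Phi$ genuinely induces a measure-preserving self-map of $\T^3$ (so that all $L^p$ and Orlicz norms of the rescaled Fejér kernel are literally those of $F_r$, with no hidden Jacobian), which is exactly what the integrality constraints $\sigma\lambda\in\N$, $5\Lambda\subset\Z^3$ and the rational choice of $\Axi$ are there to guarantee; once that is in place, everything else is chain rule plus the endpoint estimates of \cref{FejerEstimates}.
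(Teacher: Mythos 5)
Your proposal is correct and follows essentially the same route as the paper: frequency-support inspection for \eqref{Point1}--\eqref{Point3}, chain rule plus the Fej\'er estimates of \cref{FejerEstimates} and the measure-preserving change of variables for \eqref{Point4}--\eqref{Point5}, and an $L^1$--$L^\infty$ interpolation into $\LlL^\alpha$ (your direct Luxemburg-norm test with $\lambda_0\approx L\log^\alpha(2+K/L)$ is just a cleaner packaging of the paper's two-case argument via $\et\approx r^{-3/2}M_r^\xi$) followed by Leibniz and \cref{PropLuxemburg} for \eqref{Point6}--\eqref{Point7}. The constant-tracking you flag in \eqref{Point3} (needing $\sigma r$ smaller than $\tfrac12$ for the $\tfrac{\lambda}{10}$ lower bound) is a fair observation, but harmless since the final parameter choice makes $\sigma r\to 0$.
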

\begin{proof}
  The equalities in \cref{Point1}, \cref{Point2}, \cref{Point3} are true by construction.
  The inequalities in \cref{Point4}, \cref{Point5} follow from Fubini-Tonelli theorem, \cref{FejerEstimates} and the fact that the map 
  \begin{equation*}
    x \mapsto (\lambda \sigma (x\cdot \xi +\mu t,\Axi \cdot x,\xiA \cdot x))
  \end{equation*}
  is the composition of three volume preserving maps of $\T$.

  To prove \cref{Point6}, recall that $\et\approx r^{-\frac{3}{2}}M^{\xi}_r$ (\cref{etDefn} and \cref{FejerEstimates}), so 
    \begin{equation}\label{etAndM}
     \|\nabla^N \partial_t^M \et\|_{L^{\infty}_t\LlL^{\alpha}_x}\approx r^{-\frac{3}{2}}\|\nabla^N \partial_t^M M^{\xi}_r\|_{L^{\infty}_t\LlL^{\alpha}_x}.   
    \end{equation}  
    Now, let us define
    \[
      C:=\int_{\T}|\nabla^N \partial_t^M M^{\xi}_r|\log^{\alpha}(2+|\nabla^N \partial_t^M M^{\xi}_r|)\,dx
    \]
    and distinguish two cases:
    \begin{itemize}
        \item If $C\le 1$, then \cref{LuxDefn} implies that
        \begin{equation*}
            \|\nabla^N \partial_t^M M^{\xi}_r\|_{L^{\infty}_t\LlL^{\alpha}_x}\le 1\lesssim (\lambda \sigma r)^N (\lambda \sigma r \mu)^M \log^{\alpha}(\lambda),
        \end{equation*}
        therefore, 
        \begin{equation*}
            \|\nabla^N \partial_t^M \et\|_{L^{\infty}_t\LlL^{\alpha}_x}\lesssim(\lambda \sigma r)^N (\lambda \sigma r \mu)^M r^{-\frac{3}{2}} \log^{\alpha}(\lambda).
        \end{equation*}
        \item If $C>1$, since $A(s)=s\log^{\alpha}(2+s)$ is a convex function and $A(0)=0$, then, by the definition of Luxemburg norm, we have that
        \begin{equation*}
           \|\nabla^N \partial_t^M M^{\xi}_r\|_{L^{\infty}_t\LlL^{\alpha}_x}\lesssim \|\nabla^N \partial_t^M M^{\xi}_r\|_{L^{\infty}_tL^1_x}\log^{\alpha}(\|\nabla^N \partial_t^M M^{\xi}_r\|_{L^{\infty}_tL^{\infty}_x}),
        \end{equation*}
        then,
        \begin{align*}
          \|\nabla^N \partial_t^M \et\|_{L^{\infty}_t\LlL^{\alpha}_x}&\lesssim r^{-\frac{3}{2}}\|\nabla^N \partial_t^M M^{\xi}_r\|_{L^{\infty}_tL^1_x},\\
           \log^{\alpha}(r^{\frac{3}{2}}\|\nabla^N \partial_t^M \et\|_{L^{\infty}_tL^{\infty}_x})&\lesssim(\lambda \sigma r)^N (\lambda \sigma r \mu)^M r^{-\frac{3}{2}} \log^{\alpha}(\lambda),
        \end{align*}
        where the last inequality is due to \cref{etAndM} and to \cref{Point4}.
    \end{itemize}

    We finally turn to the proof of \cref{Point7}. \cref{Point6} and \cref{PropLuxemburg} imply that
    \begin{multline*}
      \|\nabla^N \partial_t^M \BW\|_{L^{\infty}_t \LlL^{\alpha}_x}
        \lesssim\sum_{k=0}^N\|\nabla^k \partial_t^M\et\|_{L^{\infty}_t \LlL^{\alpha}_x}\|\nabla ^{N-k}W_\xi\|_{L^{\infty}_t L^{\infty}_x}
        \lesssim\\
        \lesssim\sum_{k=0}^N (\lambda \sigma r)^k (\lambda \sigma r \mu)^M r^{-\frac{3}{2}} \log^{\alpha}(\lambda) \lambda^{N-k}
        \le \lambda^N (\lambda \sigma r \mu)^M r^{-\frac{3}{2}} \log^{\alpha}(\lambda),
    \end{multline*}
    which concludes the proof.
\end{proof}

\begin{prop}\label{RepresFormula}
    Consider the family of vector fields $(\BW)_{\xi \in \Lambda}$ and a family of complex values $(a_{\xi})_{\xi \in \Lambda}$ such that $\overline{a_{-\xi}}=a_{\xi}$. Then, $\sum_{\xi \in \Lambda}a_{\xi}\BW$ is a real-valued vector field and for all $R \in B_{\epsilon_{\Lambda}}(I_d)$,
    \begin{equation*}
        \sum_{\xi \in \Lambda}\gammxi^2\fint_{\T}\BW \otimes \mathbf{W}_{-\xi}dx=R, 
    \end{equation*}
    where $\epsilon$ and the functions $\gammxi$ are given in \cref{GeomLemma}.
\end{prop}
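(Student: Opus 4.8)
The proposition is a verification identity, so the plan is to compute both sides directly; the two genuinely nontrivial inputs — the geometric decomposition of \cref{GeomLemma} and the structural/Fourier properties of the kernels $\et$ — are already available, and what remains is algebra. For the real-valuedness I would first record three elementary facts: $\et$ is real-valued (the Fej\'er kernel $F$, hence $F_r$, is real), $\eta_{-\xi}=\et$ by \cref{etDefn}, and $B_{-\xi}=\overline{B_{\xi}}$, the last because $A_{-\xi}=\Axi$ and $(-\xi)\times A_{-\xi}=-\xi\times\Axi$. Combining these gives $W_{-\xi}=B_{-\xi}e^{-i\lambda x\cdot\xi}=\overline{W_{\xi}}$ and hence
\begin{equation*}
  \mathbf{W}_{-\xi}=\eta_{-\xi}W_{-\xi}=\et\,\overline{W_{\xi}}=\overline{\BW}.
\end{equation*}
Conjugating $\sum_{\xi\in\Lambda}\am\BW$ and using $\overline{a_{-\xi}}=\am$ together with the symmetry $-\Lambda=\Lambda$, the sum reindexes to itself, so it is real-valued.

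For the representation formula, the key observation is that in
\begin{equation*}
  \BW\otimes\mathbf{W}_{-\xi}=\et\,\eta_{-\xi}\,(W_{\xi}\otimes W_{-\xi})=\et^{2}\,(W_{\xi}\otimes\overline{W_{\xi}})
\end{equation*}
the oscillatory phases $e^{\pm i\lambda x\cdot\xi}$ cancel, leaving $\et^{2}$ times the \emph{constant} matrix $B_{\xi}\otimes\overline{B_{\xi}}$, so that $\fint_{\T}\BW\otimes\mathbf{W}_{-\xi}\,dx=\bigl(\fint_{\T}\et^{2}\,dx\bigr)\,B_{\xi}\otimes\overline{B_{\xi}}$. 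I would then check that $\fint_{\T}\et^{2}\,dx=1$, independently of $\xi$ and $t$: since $\et=\|F\|_{L^2}^{-3}M^{\xi}_r$ with $M^{\xi}_r=F_r$ composed with a composition of volume preserving maps of $\T$, as in the proof of \cref{MainPropBB}, the average of $\et^{2}$ reduces to $\|F\|_{L^2}^{-6}\fint_{\T}F_r^{2}\,dy$, which equals $1$ by the normalisation of $F_r$ in \cref{3DNormFejer}.

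Finally I would sum over $\Lambda$, pairing $\xi$ with $-\xi$: using $\gamma_{\xi}=\gamma_{-\xi}$ and $B_{-\xi}=\overline{B_{\xi}}$,
\begin{equation*}
  \sum_{\xi\in\Lambda}\gammxi^{2}\,B_{\xi}\otimes\overline{B_{\xi}}=\sum_{\xi\in\Lambda^{+}}\gammxi^{2}\bigl(B_{\xi}\otimes\overline{B_{\xi}}+\overline{B_{\xi}}\otimes B_{\xi}\bigr),
\end{equation*}
and, expanding $B_{\xi}=\tfrac1{\sqrt2}(\Axi+i\,\xi\times\Axi)$,
\begin{equation*}
  B_{\xi}\otimes\overline{B_{\xi}}+\overline{B_{\xi}}\otimes B_{\xi}=\Axi\otimes\Axi+(\xi\times\Axi)\otimes(\xi\times\Axi)=I_3-\xi\otimes\xi,
\end{equation*}
the last equality being the resolution of the identity along the orthonormal frame $\{\xi,\Axi,\xi\times\Axi\}$ of $\R^3$ (orthonormal since $|\xi|=|\Axi|=1$ and $\Axi\cdot\xi=0$). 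Resymmetrising and invoking \cref{GeomEquation} then yields
\begin{equation*}
  \sum_{\xi\in\Lambda}\gammxi^{2}\fint_{\T}\BW\otimes\mathbf{W}_{-\xi}\,dx=\sum_{\xi\in\Lambda}\tfrac12\gammxi^{2}(I_3-\xi\otimes\xi)=R
\end{equation*}
for every $R\in B_{\epsilon_{\Lambda}}(I_3)$. I do not expect an analytic obstacle here; the only thing requiring care is the bookkeeping with complex conjugates and the $\Lambda^{\pm}$ splitting, since everything conceptual is packaged in \cref{GeomLemma} and \cref{MainPropBB}.
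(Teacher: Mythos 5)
Your proof is correct and follows essentially the same route as the paper's: cancel the phases $e^{\pm i\lambda x\cdot\xi}$ to reduce the average to $\bigl(\fint_{\T}\eta_{\xi}^2\,dx\bigr)B_{\xi}\otimes B_{-\xi}$, use the normalisation of $F_r$ to get $\fint_{\T}\eta_{\xi}^2\,dx=1$, pair $\xi$ with $-\xi$ via $B_{\xi}\otimes B_{-\xi}+B_{-\xi}\otimes B_{\xi}=I_3-\xi\otimes\xi$ and $\gamma_{\xi}=\gamma_{-\xi}$, and conclude with \cref{GeomEquation}. The only difference is that you spell out the orthonormal-frame identity and the real-valuedness argument explicitly, which the paper leaves implicit.
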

\begin{proof}
  We have that
  \begin{multline*}
    \sum_{\xi \in \Lambda}\gammxi^2\fint_{\T}\BW \otimes \textbf{W}_{-\xi}dx
      =\sum_{\xi \in \Lambda}\gammxi^2B_{\xi} \otimes B_{-\xi}\fint_{\T}\et(x,t)^2dx=\\
      =\sum_{\xi \in \Lambda}\gammxi^2B_{\xi} \otimes B_{-\xi}
      = \sum_{\xi \in \Lambda^+}\gammxi^2(I_3-\xi \otimes \xi)=\\
      =\frac{1}{2}\sum_{\xi \in \Lambda}\gammxi^2(I_3-\xi \otimes \xi)
      =R,
  \end{multline*}
  where the second equality follows from the normalization introduced in \cref{3DNormFejer} and \cref{etDefn}, the fifth one follows from the symmetry of $\Lambda$ and \cref{GeomLemma}, and the last one follows from \cref{GeomEquation}.
\end{proof}

\subsection{The perturbation}
Following the idea proposed by Luo et al. \cite{LT20}, consider a cut-off function $\psi$ that controls the time support of the perturbation, i.e. a function that has the following properties
\begin{gather*}
  \psi(t)=1 \text{ on } \supp_t(R_q),\quad
  \supp_t(\psi)\subset N_{\delta_{q+1}}(\supp_t(R_q)),\quad
  \text{ and }\quad
  |\psi'(t)|\le 2\delta_{q+1}^{-1}.
\end{gather*}
Given a smooth non-decreasing function such that
\begin{equation*}
  \chi(s)=
    \begin{cases}
      1, &\qquad 0\le s\le 1,\\
      s, &\qquad s\geq 2,
    \end{cases}
\end{equation*}
define a function that smoothly recovers the magnitude of the tensor stress $R_q$,
\begin{equation}\label{eq:rho}
    \rho(t,x):=\frac{2}{\epsilon_{\Lambda}}\chi(\delta_{q+1}^{-1}|R_q(t,x)|)\delta_{q+1}\psi^2(t),
\end{equation}
where $\epsilon_{\Lambda}$ is the value from \cref{GeomLemma}.

Now, given $\xi \in \Lambda$, we define the magnitude of its impulsed Beltrami flow, in order to recover $R_q$ through \cref{RepresFormula}, as
\begin{equation}\label{eq:axi}
    a_{\xi}(x,t)=\rho(x,t)^{\frac{1}{2}}\gamma_{\xi}\bigg(I_3-\frac{R_q(x,t)}{\rho(x,t)}\bigg).
\end{equation}
Indeed, since
\begin{equation*}
    \frac{|R_q|}{\rho}\le \frac{\epsilon_{\Lambda}}{2}\qquad\text{on } \supp_t(R_q),
\end{equation*}
then
\begin{equation}\label{RecoverRq}
    \sum_{\xi \in \Lambda}a_{\xi}^2\fint\BW \otimes \mathbf{W}_{-\xi}dx=\rho I_3-R_q.
\end{equation}
We can finally define the new velocity field as $v_{q+1}=v_q+w$, where
\begin{equation}\label{eq:w}
  \begin{aligned}
    w&=w^{(p)}+w^{(c)}+w^{(t)},\\
    w^{(p)}&=\sum_{\xi \in \Lambda}a_{\xi}\BW,\\
    w^{(c)}&=\frac{1}{\lambda}\sum_{\xi \in \Lambda}\nabla(a_{\xi}\eta_{\xi})\times W_{\xi},\\
    w^{(t)}&=\frac{1}{\mu}\sum_{\xi \in \Lambda^+}P_H\proj_{\neq 0}(a_{\xi}^2\et^2 \xi).
  \end{aligned}
\end{equation}
Notice that the function $\psi$ allows to identify the temporal support of the perturbation $w$. Indeed,
\begin{equation}\label{TempSupp}
    \supp_t(w)\subset \supp_t(\rho) \subset \supp_t(\psi)\subset N_{\delta_{q+1}}(\supp_t R_{q+1}).
\end{equation}
Now, let us recall various estimates regarding the perturbation $w$.

\begin{lemma}[Lemma 2, \cite{LT20}]\label{Lemma1}
  Let $\rho$, $a_\xi$ be defined as in \cref{eq:rho}, \cref{eq:axi}, then
  \begin{gather*}
      \|\rho\|_{\LinfL}\le C\delta_{q+1},\qquad
      \|\rho\|_{C^0(\supp_t(R_q))}\lesssim\delta_{q+1}^{-1},\qquad
      \|\rho\|_{C^N_{t,x}}\lesssim C(\delta_{q+1},|R_q|_{C^N_{t,x}}),\\
      \|a_{\xi}\|_{\LinfLSec}\lesssim \delta_{q+1}^{\frac{1}{2}},\qquad
      \|a_{\xi}\|_{C^N_{t,x}}\lesssim C(\delta_{q+1},|R_q|_{C^N_{t,x}}).
  \end{gather*}
\end{lemma}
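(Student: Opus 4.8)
Since \cref{Lemma1} is the analogue, in our notation, of \cite[Lemma~2]{LT20}, the plan is to read everything off directly from the explicit formulas \cref{eq:rho} and \cref{eq:axi}, the structure of the cut-offs $\chi$ and $\psi$, and \cref{GeomLemma}. First I would dispose of the integral bound on $\rho$: from the shape of $\chi$ (smooth, non-decreasing, $\equiv 1$ on $[0,1]$, equal to the identity on $[2,\infty)$) one has the elementary pointwise bound $\chi(s)\le 1+s$ for all $s\ge 0$, so substituting $s=\delta_{q+1}^{-1}|R_q(t,x)|$ in \cref{eq:rho} and using $0\le\psi\le 1$ gives $\rho(t,x)\lesssim\delta_{q+1}+|R_q(t,x)|$ pointwise; integrating in $x$ and invoking the hypothesis $\|R_q\|_{\LinfL}\le\delta_{q+1}$ of \cref{IterLemma} yields $\|\rho\|_{\LinfL}\le C\delta_{q+1}$. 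Conversely, on $\supp_t(R_q)$ one has $\psi\equiv 1$ and $\chi\ge 1$, hence $\rho\ge\tfrac{2}{\epsilon_{\Lambda}}\delta_{q+1}$; this lower bound, together with the three-case analysis of $\chi$ in its definition, is exactly what forces $|R_q|/\rho\le\epsilon_{\Lambda}/2$ there, so that $I_3-R_q/\rho\in B_{\epsilon_{\Lambda}}(I_3)$, the field $a_\xi$ of \cref{eq:axi} is well defined, \cref{RecoverRq} holds, and the claimed $C^0$ bound on $\supp_t(R_q)$ follows again from $\rho\lesssim\delta_{q+1}+|R_q|$ and the smoothness of $R_q$.

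For the $\LinfLSec$ bound on $a_\xi$ I would simply square \cref{eq:axi} to get $a_\xi^2=\rho\,\gamma_\xi\!\bigl(I_3-R_q/\rho\bigr)^2$; since on $\supp_t(R_q)$ the argument stays in the compact set $\overline{B_{\epsilon_\Lambda/2}(I_3)}\subset B_{\epsilon_\Lambda}(I_3)$ on which $\gamma_\xi$ is smooth, hence bounded, this gives $a_\xi^2\lesssim\rho$ pointwise, so $\|a_\xi(t)\|_{L^2_x}^2=\|a_\xi^2(t)\|_{L^1_x}\lesssim\|\rho(t)\|_{L^1_x}\lesssim\delta_{q+1}$ uniformly in $t$, which is the claim.

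The remaining $C^N_{t,x}$ bounds on $\rho$ and $a_\xi$ follow by iterating the Leibniz and chain rules (the Fa\`a di Bruno formula), feeding in the following facts: $\psi$ is a fixed smooth function with $|\psi'|\le 2\delta_{q+1}^{-1}$ and analogous bounds on higher derivatives; $\chi$ and $\gamma_\xi$ are smooth; on $\supp_t(R_q)$ one has $\rho\gtrsim\delta_{q+1}$, so $R_q/\rho$ is smooth there with derivatives controlled by negative powers of $\delta_{q+1}$ and by the $C^N$-norms of $R_q$, while outside $\supp_t(R_q)$ the tensor $R_q$ vanishes identically (since $\psi\equiv 1$ on $\supp_t(R_q)$), so $R_q/\rho\equiv 0$ and no denominator ever degenerates; and $I_3-R_q/\rho$ always ranges in a compact subset of $B_{\epsilon_\Lambda}(I_3)$, so composing with $\gamma_\xi$ is harmless. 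Assembling these estimates produces constants of the announced form $C(\delta_{q+1},|R_q|_{C^N_{t,x}})$.

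The only genuinely delicate point I anticipate is the bookkeeping of the $C^N$ estimates across the boundary of $\supp_t(R_q)$ and across the transition region $\{1\le\delta_{q+1}^{-1}|R_q|\le 2\}$, where $R_q$ — and hence $R_q/\rho$ — may vanish while $\rho$ itself does not: one must check that the derivatives of $\chi(\delta_{q+1}^{-1}|R_q|)$ and of $1/\rho$ stay bounded there, which is precisely where the smoothness (and not mere continuity) of $\chi$ and the uniform lower bound on $\rho$ on $\supp_t(R_q)$ are used. Everything else is a routine unwinding of the definitions, exactly as in \cite{LT20}.
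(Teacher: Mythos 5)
The paper does not actually prove this lemma; it is quoted verbatim from \cite{LT20}, so your reconstruction is being compared against the standard argument there, and it matches it: the pointwise bound $\chi(s)\lesssim 1+s$ giving $\rho\lesssim\delta_{q+1}+|R_q|$ and hence the $\LinfL$ bound via the hypothesis of \cref{IterLemma}; the lower bound $\rho\ge\tfrac{2}{\epsilon_\Lambda}\delta_{q+1}$ on $\supp_t(R_q)$ (using $\psi\equiv1$, $\chi\ge1$, and implicitly $\chi(s)\ge s$) to keep the argument of $\gamma_\xi$ in $B_{\epsilon_\Lambda}(I_3)$; $a_\xi^2\lesssim\rho$ from the boundedness of $\gamma_\xi$ on a compact subset, giving the $\LinfLSec$ bound; and Leibniz/Fa\`a di Bruno for the $C^N$ bounds, with the correct observation that $\chi(\delta_{q+1}^{-1}|R_q|)$ is smooth because $\chi$ is constant near $0$ and $|R_q|$ is smooth where it is nonzero.

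The one point where your derivation does not deliver the inequality as stated is the middle bound on $\rho$: from $\rho\lesssim\delta_{q+1}+|R_q|$ you only get $\|\rho\|_{C^0(\supp_t(R_q))}\lesssim\delta_{q+1}+\|R_q\|_{C^0}$, which is not $\lesssim\delta_{q+1}^{-1}$ without further input. The quantity that is genuinely $\lesssim\delta_{q+1}^{-1}$ is $\|\rho^{-1}\|_{C^0(\supp_t(R_q))}$, i.e.\ exactly the lower bound $\rho\ge\tfrac{2}{\epsilon_\Lambda}\delta_{q+1}$ that you do establish and that is what the rest of the construction (well-definedness of \cref{eq:axi}, the identity \cref{RecoverRq}, and the $C^N$ bounds on $R_q/\rho$) actually uses; the statement in the paper should be read in that sense. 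With that reading, your proof is complete and is the same argument as in \cite{LT20}.
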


\begin{lemma}[Lemma 2.1, \cite{MS18}]\label{Lemma2}
  Given $1\le p \le \infty$, if $f,g\in C^{\infty}(\T)$, with $g$ $(\frac{\mathbf{T}}{N})^3$-periodic and $N \in \N$, then,
    \begin{equation*}
        \|fg\|_{L^p}\lesssim \|f\|_{L^p}\|g\|_{L^p}+N^{-\frac{1}{p}}\|f\|_{C^1}\|g\|_{L^p}.
    \end{equation*}
\end{lemma}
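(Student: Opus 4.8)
The case $p=\infty$ is trivial, since then $N^{-1/p}=1$ and $\|fg\|_{L^\infty}\le\|f\|_{L^\infty}\|g\|_{L^\infty}$, so assume henceforth $1\le p<\infty$. The plan is the familiar ``improved H\"older'' argument: partition $\T$ into $N^3$ congruent cubes $Q_1,\dots,Q_{N^3}$ of side length $2\pi/N$, aligned with the periodicity cell of $g$, and exploit that on each $Q_j$ the factor $g$ runs through exactly one full period --- because it is $(\mathbf{T}/N)^3$-periodic --- while $f$ is constant up to an oscillation controlled by $\|f\|_{C^1}$ and the diameter of $Q_j$.

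Concretely, I would first record two elementary facts. Periodicity of $g$ implies that $\int_{Q_j}|g|^p\,dx$ is independent of $j$, so summing over $j$ gives the scaling identity $\|g\|_{L^p(Q_j)}^p=N^{-3}\|g\|_{L^p(\T)}^p$ for every $j$. And writing $\bar f_j:=\fint_{Q_j}f\,dx$, Jensen's inequality gives $|\bar f_j|^p\le\fint_{Q_j}|f|^p\,dx\lesssim N^3\|f\|_{L^p(Q_j)}^p$, while the mean value inequality gives $\|f-\bar f_j\|_{L^\infty(Q_j)}\le\operatorname{diam}(Q_j)\,\|\nabla f\|_{L^\infty}\lesssim N^{-1}\|f\|_{C^1}$.

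With these in hand I would decompose $fg=\bar f_j\,g+(f-\bar f_j)\,g$ on each $Q_j$ and use the triangle inequality in $L^p(Q_j)$ together with $\|(f-\bar f_j)g\|_{L^p(Q_j)}\le\|f-\bar f_j\|_{L^\infty(Q_j)}\|g\|_{L^p(Q_j)}$ to obtain
\[
  \|fg\|_{L^p(Q_j)}\le\bigl(|\bar f_j|+\|f-\bar f_j\|_{L^\infty(Q_j)}\bigr)\|g\|_{L^p(Q_j)}.
\]
Raising this to the $p$-th power via $(a+b)^p\le 2^{p-1}(a^p+b^p)$, substituting the three bounds above, and summing over the $N^3$ cubes, the first summand collapses (the $N^3$ from Jensen cancelling the $N^{-3}$ from periodicity, and $\sum_j\|f\|_{L^p(Q_j)}^p=\|f\|_{L^p(\T)}^p$) to $\|f\|_{L^p(\T)}^p\|g\|_{L^p(\T)}^p$, while the second becomes $N^3\cdot N^{-p}\|f\|_{C^1}^p\cdot N^{-3}\|g\|_{L^p(\T)}^p=N^{-p}\|f\|_{C^1}^p\|g\|_{L^p(\T)}^p$. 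Taking $p$-th roots and using $(a+b)^{1/p}\le a^{1/p}+b^{1/p}$ yields $\|fg\|_{L^p}\lesssim\|f\|_{L^p}\|g\|_{L^p}+N^{-1}\|f\|_{C^1}\|g\|_{L^p}$, which is actually slightly stronger than the asserted bound since $N^{-1}\le N^{-1/p}$. I do not foresee any genuine obstacle: the argument is pure bookkeeping, the only delicate points being to keep the scaling in the periodicity identity exactly right --- there are precisely $N^3$ sub-cubes, each carrying one full period of $g$, so no stray power of $N$ survives --- and to perform the $L^p$-additivity over the partition \emph{before} taking $p$-th roots, so that the cancellation of the $N^{\pm3}$ factors is transparent.
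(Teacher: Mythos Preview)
Your argument is correct and is essentially the standard ``improved H\"older'' proof of this lemma; note that the paper does not supply its own proof here but simply quotes the result from \cite{MS18}, so there is nothing to compare against beyond the original reference. Your observation that the argument actually yields the sharper factor $N^{-1}$ in place of $N^{-1/p}$ is also correct (and well known); the weaker exponent in the statement is harmless for the applications in the paper.
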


In the rest of the section we will denote by $C_a(N)$ any value that depends polynomially on $\sup_{n\le N}\|a_{\xi}\|_{C^n_{t,x}}$.

\begin{lemma}\label{MainEstimate}
  If $w$, $w^{(p)}$, $w^{(c)}$, $w^{(t)}$ are defined as in \cref{eq:w}, then
  \begin{align*}
    \|w^{(p)}\|_{\LinfLSec}&\lesssim \delta_{q+1}^{\frac{1}{2}}+C_a(1)(\lambda \sigma)^{-\frac{1}{2}},\\
    \|w^{(c)}\|_{L^{\infty}_tL^2_x}&\lesssim C_a(1)\Bigl(\frac{1}{\lambda}+\sigma r\Bigr),\\
    \|w^{(t)}\|_{L^{\infty}_tL^2_x}&\lesssim C_a(0)\frac{r^{\frac{3}{2}}}{\mu},\\
    \|w\|_{L^{\infty}_{t}L^{s}_x}&\lesssim C_a(0)r^{\frac{3}{2}-\frac{3}{s}},\qquad \text{ for } s \in (1,2]. 
  \end{align*}
\end{lemma}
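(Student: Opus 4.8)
The plan is to estimate each of the four quantities separately, using the building block estimates from \cref{MainPropBB} together with \cref{Lemma1} and \cref{Lemma2}. For the principal part $w^{(p)}=\sum_\xi a_\xi\BW$, I would split $a_\xi\BW = \big(\fint a_\xi^2\,dx\big)^{1/2}$-type leading term plus an oscillatory remainder: more precisely, since $\BW=\et W_\xi$ and $\et$ is $(\mathbf{T}/(\sigma\lambda))^3$-periodic (up to the time shift), apply \cref{Lemma2} with $f=a_\xi$, $g=\et W_\xi$ and $N\approx\sigma\lambda$, $p=2$. This gives $\|a_\xi\BW\|_{L^2}\lesssim \|a_\xi\|_{L^2}\|\et\|_{L^2}\|W_\xi\|_{L^\infty} + (\sigma\lambda)^{-1/2}\|a_\xi\|_{C^1}\|\et\|_{L^2}\|W_\xi\|_{L^\infty}$; since $\|\et\|_{L^\infty_tL^2_x}\lesssim r^{3/2-3/2}=1$ by \cref{Point4} (with $N=M=0$, $p=2$) and $W_\xi$ is bounded, and $\|a_\xi\|_{\LinfLSec}\lesssim\delta_{q+1}^{1/2}$ by \cref{Lemma1}, we get $\|w^{(p)}\|_{\LinfLSec}\lesssim\delta_{q+1}^{1/2}+C_a(1)(\sigma\lambda)^{-1/2}$, noting $\Lambda$ is finite so the sum over $\xi$ only costs a constant.

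For the corrector $w^{(c)}=\frac{1}{\lambda}\sum_\xi\nabla(a_\xi\et)\times W_\xi$, I would expand $\nabla(a_\xi\et)=(\nabla a_\xi)\et + a_\xi\nabla\et$ and bound in $L^2$: the first piece gives $\frac1\lambda\|\nabla a_\xi\|_{C^0}\|\et\|_{L^2}\lesssim \frac1\lambda C_a(1)$, and the second gives $\frac1\lambda\|a_\xi\|_{C^0}\|\nabla\et\|_{L^\infty_tL^2_x}\lesssim\frac1\lambda C_a(0)(\lambda\sigma r)=C_a(0)\sigma r$ using \cref{Point4} with $N=1$, $M=0$, $p=2$. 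Adding these and absorbing into $C_a(1)$ yields $\|w^{(c)}\|_{L^\infty_tL^2_x}\lesssim C_a(1)(\frac1\lambda+\sigma r)$. For the temporal corrector $w^{(t)}=\frac1\mu\sum_{\xi\in\Lambda^+}P_H\proj_{\neq0}(a_\xi^2\et^2\xi)$, since $P_H\proj_{\neq0}$ is bounded on $L^2$, I would estimate $\frac1\mu\|a_\xi^2\et^2\|_{L^2}\le\frac1\mu\|a_\xi\|_{C^0}^2\|\et^2\|_{L^2}=\frac1\mu C_a(0)\|\et\|_{L^4}^2\lesssim\frac1\mu C_a(0)\,r^{2(3/2-3/4)}=\frac1\mu C_a(0)r^{3/2}$, again by \cref{Point4} with $p=4$, giving the stated bound.

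For the last estimate, $\|w\|_{L^\infty_tL^s_x}\lesssim C_a(0)r^{3/2-3/s}$ for $s\in(1,2]$, the dominant term is $w^{(p)}$, which I would bound crudely by $\|a_\xi\|_{C^0}\|\BW\|_{L^\infty_tL^s_x}\lesssim C_a(0)\lambda^0 r^{3/2-3/s}$ via \cref{Point5} with $N=M=0$, $p=s$; the terms $w^{(c)}$ and $w^{(t)}$ carry extra factors of $\lambda^{-1}$ or $\mu^{-1}$ and strictly smaller powers of $r$ (using $\|\nabla\et\|_{L^s}\lesssim(\lambda\sigma r)r^{3/2-3/s}$, $\sigma r<\tfrac12$, and $\mu>r^{3/2}$ from \cref{AssumPams}), hence are subleading and absorbed.

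I expect the bookkeeping around which $C^n_{t,x}$-norm of $a_\xi$ enters each estimate (i.e.\ tracking the index in $C_a(N)$) and the correct application of \cref{Lemma2} with the right periodicity cell for $\et$ — accounting for the $\mu t$ time-shift inside $M_r^\xi$, which does not affect spatial periodicity — to be the only genuinely delicate points; everything else is a direct substitution of \cref{Point4}, \cref{Point5}, \cref{FejerEstimates}, and \cref{Lemma1} into the triangle inequality over the finite index set $\Lambda$.
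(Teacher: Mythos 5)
Your proposal is correct and follows essentially the same route as the paper's (much terser) proof: \cref{Lemma2} applied to $a_\xi\mathbf{W}_\xi$ with periodicity cell $(\mathbf{T}/(\lambda\sigma))^3$ for the first bound, and direct substitution of \cref{Point4}, \cref{Point5}, \cref{Lemma1} and the $L^q$-boundedness of $P_H$ for the rest. The only cosmetic discrepancy is that your last bound naturally produces $C_a(1)$ rather than $C_a(0)$ because of the $\nabla a_\xi$ term in $w^{(c)}$, which is harmless for the subsequent application since $C_a(N)$ is independent of $\lambda$.
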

\begin{proof}
  Since $\BW$ is $(\frac{\mathbf{T}}{\lambda \sigma})^3$-periodic (this follows from \cref{Point1}), \cref{Lemma2} can be applied to any term $a_{\xi}\BW$. Then, the first inequality follows from the estimates shown in \cref{Lemma1}, and from \cref{Point5}.

  The other inequalities from \cref{MainPropBB} and the fact that the Leray-Helmholtz projector $P_H$ is bounded from $L^q$ to itself for $1<q<\infty$.
\end{proof}

\subsection{Estimating the stress}\label{subs:stressEstimate}
In the previous section we have defined the perturbation $w$ and the new velocity field $v_{q+1}$. Now, we complete the proof of \cref{IterLemma} showing how to properly build the pressure $p_{q+1}$ and the tensor stress $R_{q+1}$.

Let us first introduce an anti-divergence operator, namely a right-inverse of the divergence operator, which maps vector fields into trace-free matrix-valued functions.

\begin{defn}\label{AntiDivDefn}
    Let $u\in C^{\infty}(\T,\R^3)$ be a smooth vector field. A \emph{anti-divergence} is the linear operator defined by
    \begin{equation*}
        \AntiDiv(u)=\frac{1}{4}\big(\nabla P_Hv+(\nabla P_Hv)^T)+\frac{3}{4}(\nabla v+(\nabla v)^T\big)-\frac{1}{2}\Div( v)I_d,
    \end{equation*}
    where $v\in C^{\infty}(\T,\R^3)$ is the unique solution to the problem
    \[
      \begin{cases}
        \Delta v=u-\fint u\,dx,\\
        \fint v\,dx=0,
      \end{cases}
    \]
    with periodic boundary conditions.
\end{defn}

The next proposition fully justifies the name given to the operator $\AntiDiv$.

\begin{prop}
    Consider $u\in C^{\infty}(\T,\R^3)$, then,
        \begin{equation}\label{AntiDiv}
            \Div(\AntiDiv(u))=u-\fint_{\T} u, \qquad \Tr(\mathcal{R}(u))=0.
        \end{equation} 
    Moreover, for all $k \in\Z^3$, with $k\neq0$,
    \begin{equation}\label{AntiDivFourier}
      \begin{aligned}
        [\mathcal{F}(\AntiDiv(u))(k)_j^h]_{j,h=1}^3
          & = \frac{i}{2|k|^2} (\mathcal{F}(u)(k)\cdot k)\mathbf{1}_{j=h}
            + \frac{i\,k_h k_j}{2|k|^4}(\mathcal{F}(u)(k)\cdot k) + {}\\
          & \quad - \frac{i k_h}{|k|^2}\mathcal{F}(u)(k)_j - \frac{i k_j}{|k|^2} \mathcal{F}(u)(k)_h, 
    \end{aligned}
    \end{equation}
    where $\mathcal{F}(v)(k)_j$ is the $j^\text{th}$ component of the $k^\text{th}$ Fourier coefficient of $v$.
\end{prop}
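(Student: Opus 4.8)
The plan is to verify the two identities in \cref{AntiDiv} and the Fourier formula \cref{AntiDivFourier} by direct computation, working on the Fourier side, where the operators $\nabla$, $P_H$, $\Div$ and $\Delta^{-1}$ all act as explicit multipliers. First I would record that, since $u$ has zero mean after subtracting $\fint u$, the auxiliary field $v$ solving $\Delta v = u-\fint u$, $\fint v=0$ is well-defined and unique, with $\mathcal{F}(v)(k) = -|k|^{-2}\mathcal{F}(u)(k)$ for $k\neq 0$ and $\mathcal{F}(v)(0)=0$. The Leray--Helmholtz projector satisfies $\mathcal{F}(P_Hv)(k) = \mathcal{F}(v)(k) - |k|^{-2}(\mathcal{F}(v)(k)\cdot k)k$ for $k\neq0$, and the gradient acts by $\mathcal{F}(\nabla v)(k)_{jh} = i k_h\,\mathcal{F}(v)(k)_j$ (using the convention matching the formula to be proved). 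Substituting these into the definition of $\AntiDiv(u)$ and collecting terms gives \cref{AntiDivFourier}: the term $\frac34(\nabla v + (\nabla v)^T)$ produces the last two summands $-\frac{ik_h}{|k|^2}\mathcal{F}(u)(k)_j - \frac{ik_j}{|k|^2}\mathcal{F}(u)(k)_h$ after a factor-tracking bookkeeping with the $\frac14$-term (the symmetrized $\nabla P_Hv$ contributes both a piece proportional to $\mathcal{F}(u)_j$, $\mathcal{F}(u)_h$ and a piece with $k_hk_j(\mathcal{F}(u)\cdot k)$), while $-\frac12\Div(v)I_d$ produces the diagonal term $\frac{i}{2|k|^2}(\mathcal{F}(u)(k)\cdot k)\mathbf{1}_{j=h}$ since $\mathcal{F}(\Div v)(k) = ik\cdot\mathcal{F}(v)(k) = -i|k|^{-2}(k\cdot\mathcal{F}(u)(k))$.

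Next, to get $\Tr(\AntiDiv(u))=0$, I would take the trace of the defining expression directly in physical space: $\Tr(\nabla P_Hv + (\nabla P_Hv)^T) = 2\Div(P_Hv) = 0$ because $P_Hv$ is divergence-free by construction; $\Tr(\nabla v + (\nabla v)^T) = 2\Div(v)$; and $\Tr(\Div(v)I_d) = 3\Div(v)$. Hence $\Tr(\AntiDiv(u)) = \frac14\cdot 0 + \frac34\cdot 2\Div v - \frac12\cdot 3\Div v = \frac32\Div v - \frac32\Div v = 0$. (Equivalently one checks it on the Fourier side by contracting \cref{AntiDivFourier} over $j=h$, which gives $\frac{3i}{2|k|^2}(\mathcal{F}(u)\cdot k) + \frac{i}{2|k|^2}(\mathcal{F}(u)\cdot k) - \frac{2i}{|k|^2}(k\cdot\mathcal{F}(u)) = 0$.)

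Finally, for $\Div(\AntiDiv(u)) = u - \fint u$, I would compute the divergence row by row, again most cleanly from \cref{AntiDivFourier}: contracting with $ik_h$ and summing over $h$, the diagonal term yields $\frac{i}{2|k|^2}(\mathcal{F}(u)\cdot k)\cdot ik_j = -\frac{k_j}{2|k|^2}(\mathcal{F}(u)\cdot k)$, the $k_hk_j$ term yields $-\frac{k_j}{2|k|^2}(\mathcal{F}(u)\cdot k)$, the term $-\frac{ik_h}{|k|^2}\mathcal{F}(u)_j$ yields $\frac{|k|^2}{|k|^2}\mathcal{F}(u)_j = \mathcal{F}(u)_j$, and the term $-\frac{ik_j}{|k|^2}\mathcal{F}(u)_h$ yields $\frac{k_j}{|k|^2}(k\cdot\mathcal{F}(u))$; summing, the three $k_j(\mathcal{F}(u)\cdot k)$ contributions cancel ($-\frac12-\frac12+1=0$) and we are left with $\mathcal{F}(u)(k)_j$ for every $k\neq0$, which is exactly $\mathcal{F}(u - \fint u)(k)$. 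Alternatively, one can argue structurally in physical space: $\Div$ applied to a symmetric-gradient $\frac12(\nabla w + (\nabla w)^T)$ equals $\frac12(\Delta w + \nabla\Div w)$, so $\Div\AntiDiv(u) = \frac14(\Delta P_Hv + \nabla\Div P_Hv) + \frac34(\Delta v + \nabla\Div v) - \frac12\nabla\Div v = \frac14\Delta P_Hv + \frac34\Delta v + \frac14\nabla\Div v$; using $\Delta P_Hv = P_H\Delta v = P_H(u-\fint u)$, $\Delta v = u - \fint u$, and $\nabla\Div v = (I-P_H)(u - \fint u)$ (since $\Delta(I-P_H) = \nabla\Div$ on mean-zero fields), this collapses to $\frac14 P_H(u-\fint u) + \frac34(u-\fint u) + \frac14(I-P_H)(u-\fint u) = u - \fint u$. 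The only real care needed throughout is consistency of the Fourier-sign convention ($e^{ik\cdot x}$ versus $e^{-ik\cdot x}$) between $\mathcal{F}(\nabla v)$ and the target formula, and keeping the handling of the $k=0$ mode separate; there is no genuine obstacle, the content being purely a bookkeeping verification.
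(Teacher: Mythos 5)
Your computation is correct, and it is exactly the "straightforward computation" that the paper's one-line proof alludes to: working on the Fourier side with $\mathcal{F}(v)(k)=-|k|^{-2}\mathcal{F}(u)(k)$, the multiplier forms of $\nabla$, $P_H$ and $\Div$, and then verifying the trace and divergence identities by contraction (your coefficient bookkeeping, including the cancellations $\tfrac32-\tfrac32=0$ for the trace and $-\tfrac12-\tfrac12+1=0$ for the divergence, checks out). Nothing further is needed.
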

\begin{proof}
    This result follows from straightforward computations.
\end{proof}
\begin{oss}
    \cref{AntiDivFourier} shows that $|\nabla|\AntiDiv(u)$ is obtained by applying three or one Riesz transforms to the components of the vector field $u$.
\end{oss}

Since $(v_q,p_q,R_q)$ solves \cref{ApproxEq}, then
\begin{equation*}
  \begin{aligned}
    \lefteqn{\partial_t v_{q+1}+\Div(v_{q+1}\otimes v_{q+1})+\Diss v_{q+1}=}\qquad&\\
      &=\underbrace{\Div(R_q)-\nabla p_q+\Div(w^{(p)} \otimes w^{(p)})+\partial_tw^{(t)}}_\text{oscillation error} + {}\\
      &\quad + \underbrace{\Div(v_q\otimes w+w\otimes v_q)+\Diss w+\partial_t(w^{(c)}+w^{(p)})}_\text{linear error} + {}\\
      &\quad + \underbrace{\Div(w^{(p)} \otimes (w^{(c)}+w^{(t)})+(w^{(c)}+w^{(t)})\otimes w)}_\text{corrector error}. 
  \end{aligned}
\end{equation*}

Now, we show that the terms on the RHS can be expressed as the gradient of a scalar function $p_{q+1}$ and as the divergence of a matrix-valued function $R_{q+1}$. In addition, we shall prove that $\|R_{q+1}\|_{\LinfL}\le \delta_{q+2}$.

Before estimating $\|R_{q+1}\|_{\LinfL}$, let us prove a useful lemma.
\begin{lemma}\label{MultiplierAndPeriodicity}
 Consider $a,b \in C^{\infty}(\T)$ such that $\proj_{\geq N}b=b$.
\begin{enumerate}
    \item Let $\beta>3$ and $N \in 2\N^*$. Then,
    \begin{equation}\label{MultiplierAndPeriodicity1}
        \|\mathcal{R}|\nabla|T_M (ab)\|_{L^1}\le \log^{3-\beta}(N)\|a\|_{H^3}\|b\|_{\LlL^1},
    \end{equation}
    where $|\nabla|$ is the operator associated with the multiplier $m(k)=|k|$.
    \item Moreover, 
    \begin{equation}\label{MultiplierAndPeriodicity2}
        \|\mathcal{R}\proj_{\neq 0}(ab)\|_{L^1}\lesssim \frac1N\log^3(N)\|a\|_{H^4}\|b\|_{\LlL^2},
    \end{equation}
    where the space $\LlL^{\alpha}$ is defined in \cref{DefnLlL}.
\end{enumerate}  
\end{lemma}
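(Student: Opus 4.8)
The plan is to exploit the explicit Fourier-side formula \cref{AntiDivFourier} together with the multiplier bounds from \cref{c:FourierMult}. For part (1), note that $\AntiDiv|\nabla|T_M(ab)$ can be written, via \cref{AntiDivFourier}, as a finite sum of operators of the form $T_{m_i}$ applied to $T_M(ab)$, where each $m_i$ is either a product of one or three iterated Riesz multipliers times $1/|k|$ (from the $|\nabla|\AntiDiv$ part, using the \cref{AntiDivFourier} structure and the remark that $|\nabla|\AntiDiv$ is built from one or three Riesz transforms) times a further factor $1/|k|$ coming from the extra $|\nabla|$ balanced against $|k|^2$ in the denominators. So schematically $\AntiDiv|\nabla| = \sum_i R_i \circ T_{1/|k|}$ with $R_i$ bounded Riesz-type multipliers. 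Then I would factor $T_M\proj_{\geq N}$ out: since $\proj_{\geq N}b = b$, we have $T_M(ab) = T_M\proj_{\geq N'}(ab)$ for a suitable $N' \approx N$ (products only spread frequencies, and $a$ is fixed-frequency-bounded via its $H^3$ norm — here one must be slightly careful that the product $ab$ still has frequencies bounded below, which is false in general; instead one keeps $T_M$ acting on all of $ab$ and uses that the dangerous low-frequency part is controlled differently). The cleaner route: apply \cref{Div-1Log} to bound $\|\proj_{|k|_1 > N}T_M\|_{\mathcal{L}(L^1,L^1)} \lesssim \log^{3-\beta}(N)$, and handle the commutator/low-frequency remainder by absorbing it into the $H^3$ norm of $a$.

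\textbf{Key steps for (1).} First, write $\AntiDiv|\nabla|T_M(ab) = \sum_i T_{m_i}\bigl(T_M(ab)\bigr)$ using \cref{AntiDivFourier}, where the $m_i$ are bounded multipliers that are products of iterated Riesz multipliers (\cref{IterRiesz}) with at most one factor $1/|k|$. Since these Riesz-type pieces are bounded on $\LlL^{\alpha+1}\to\LlL^{\alpha}$ and in particular we only need the $L^1$-type endpoint, the main point is to estimate $\|T_M(ab)\|$ in an appropriate space. Second, decompose $T_M(ab) = T_M\proj_{>N}(ab) + T_M\proj_{\leq N}(ab)$. For the high part, by \cref{Div-1Log} (\cref{ProjEstimateDiv-1log}), $\|T_M\proj_{>N}\|_{\mathcal{L}(L^1,L^1)} \lesssim \log^{3-\beta}(N)$; combined with a Hölder-type estimate $\|ab\|_{L^1}$ versus $\|a\|_{H^3}\|b\|_{\LlL^1}$ (using the embedding $H^{3/2+} \hookrightarrow L^\infty$ so that $\|a\|_{L^\infty} \lesssim \|a\|_{H^3}$, then $\|ab\|_{L^1} \le \|a\|_{L^\infty}\|b\|_{L^1} \lesssim \|a\|_{H^3}\|b\|_{\LlL^1}$) this yields the claimed bound with the factor $\log^{3-\beta}(N)$. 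For the low part $\proj_{\leq N}(ab)$: since $\proj_{\geq N}b = b$, a nonzero frequency $k$ with $|k|_1 \leq N$ in $ab$ can only arise from $a$ having frequency $\approx |k - k'|$ with $|k'|_1 \geq N$, hence $|k-k'|_1 \gtrsim N/2$; so this piece is controlled by the high Sobolev norm $\|a\|_{H^3}$ (paying $N^{-3}$, which beats $\log^{3-\beta}(N)$ for the relevant range of $N$), times $\|b\|_{L^1} \le \|b\|_{\LlL^1}$, after also accounting for the $\AntiDiv|\nabla|$ operator which is bounded on $L^1$ on this frequency-localized piece.

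\textbf{Key steps for (2).} The argument is parallel but simpler: $\AntiDiv\proj_{\neq 0}(ab)$, via \cref{AntiDivFourier}, equals $T_{1/|k|}$ composed with bounded Riesz-type multipliers applied to $\proj_{\neq 0}(ab)$, since $\AntiDiv$ carries exactly one factor $1/|k|$ relative to the Riesz structure. Then $\|T_{1/|k|}\proj_{>N}\|_{\mathcal{L}(L^1,L^1)} \lesssim \frac{1}{N}\log^3(N)$ by \cref{ProjEstimateDiv-1}, and the frequency-support argument above (using $\proj_{\geq N}b = b$, so $\proj_{\neq 0, \leq N}(ab)$ only sees high frequencies of $a$) handles the remaining low-frequency part at cost $N^{-4}\|a\|_{H^4}$, with $\|b\|_{\LlL^2}$ replacing $\|b\|_{\LlL^1}$ because one extra Hölder factor is needed to convert to $L^1$ on the nonlocal terms — concretely $\|ab\|_{L^1} \le \|a\|_{L^2}\|b\|_{L^2} \lesssim \|a\|_{H^4}\|b\|_{\LlL^2}$ is wasteful but the sharper route uses $\|a\|_{L^\infty}\|b\|_{L^1}$; the $\LlL^2$ norm appears naturally when one instead keeps track of the logarithmic loss in converting the weak-$L^1$ estimates of the Riesz pieces back to $L^1$, invoking \cref{DirHilbert}/\cref{IterRiesz}.

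\textbf{Main obstacle.} The delicate point is the bookkeeping of frequency supports: ensuring that after multiplying by the low-frequency multiplier $1/|k|_1$ or $T_M$ one can still legitimately restrict to frequencies $\gtrsim N$, given that the product $ab$ does \emph{not} inherit the frequency lower bound of $b$. Resolving this requires splitting at level $N$ and showing the "leakage" into low frequencies is entirely due to high-frequency content of the smooth factor $a$, hence controlled by $\|a\|_{H^3}$ (resp. $\|a\|_{H^4}$) with a polynomial gain in $N$ that dominates the logarithmic factors. A secondary technical nuisance is passing from the weak-$(1,1)$ bounds for the Riesz-type and Hilbert-type multipliers (\cref{DirHilbert}, \cref{IterRiesz}) to genuine $L^1 \to \LlL^1$ or $\LlL^1 \to L^1$ estimates, which is precisely where the Orlicz spaces $\LlL^\alpha$ enter and where the exponents $1$ and $2$ in the two statements come from.
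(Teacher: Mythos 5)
Your architecture for part (1) matches the paper's: factor $\mathcal{R}|\nabla|T_M$ into iterated Riesz multipliers composed with $T_M$, apply \cref{ProjEstimateDiv-1log} to the high-frequency part, and charge the leakage of $ab$ into low frequencies to a high Sobolev norm of $a$. Two points need repair, though. First, your frequency bookkeeping is off: from $|k|_1\le N$ and $|k'|_1\ge N$ you cannot conclude $|k-k'|_1\gtrsim N/2$ (take $|k|_1=|k'|_1=N$). The paper avoids this entirely by splitting the smooth factor, $a=\proj_{<N/2}a+\proj_{\ge N/2}a$: then $\proj_{<N/2}(a)\,b$ automatically has all frequencies $\ge N/2$, so no leakage argument is needed, and the remainder is paid for by $\|\proj_{\ge N/2}a\|_{L^\infty}\lesssim N^{-1}\|a\|_{H^3}$, which beats $\log^{3-\beta}(N)$. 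Second, the Riesz pieces are not bounded on $L^1$, so the chain must pass through the Orlicz norm, $\|T_{m_1}(a_1b)\|_{L^1}\lesssim\|a_1b\|_{\LlL^1}\le\|a_1\|_{L^\infty}\|b\|_{\LlL^1}$ via \cref{IterRiesz} and \cref{PropLuxemburg}; your claim that $\mathcal{R}|\nabla|$ is bounded on $L^1$ "on the frequency-localized piece" is unjustified (a Riesz transform truncated to frequencies $\le N$ is not uniformly bounded on $L^1$). Both issues are fixable along the paper's lines.

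Part (2) contains a genuine gap. You invoke $\|T_{1/|k|}\proj_{>N}\|_{\mathcal{L}(L^1,L^1)}\lesssim\frac1N\log^3(N)$ "by \cref{ProjEstimateDiv-1}", but that estimate is proved only for the multiplier $1/|k|_1$, and \cref{Div-1Log} explicitly notes that the same argument fails for the Euclidean $1/|k|$, since $\|D^2_j\|_{L^1}\approx j$. The symbol of $\mathcal{R}\proj_{\neq0}$ in \cref{AntiDivFourier} carries Euclidean powers $|k|^{-2}$ and $|k|^{-4}$, so the missing step is the identity $\frac{1}{|k|}=\frac{1}{|k|_1}\cdot\frac{\sum_j\sign(k_j)k_j}{|k|}$, which rewrites the operator as $T_{1/|k|_1}$ composed with iterated Riesz transforms and directional Hilbert transforms. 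This is exactly why \cref{DirHilbert} appears in the paper and why the right-hand side of \cref{MultiplierAndPeriodicity2} carries $\|b\|_{\LlL^2}$ rather than $\|b\|_{\LlL^1}$: the composition of one Riesz-type and one Hilbert-type operator costs two logarithms, mapping $\LlL^2\to\LlL^1\to L^1$, after which $\proj_{>N}T_{1/|k|_1}$ supplies the factor $\frac1N\log^3(N)$ on $L^1$. You cite \cref{DirHilbert} in passing at the end, but without the identity above the main line of your argument for (2) rests on an estimate that is false for the multiplier you actually have.
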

\begin{proof}
  Define $a_1=\proj_{<\frac{N}{2}}(a)$ and $a_2=\proj_{\geq \frac{N}{2}}(a)$. The assumption on $b$ implies that
\begin{equation*}
    ab=\proj_{\geq\frac{N}{2}}(a_1b)+(a_2b).
\end{equation*}
Notice that \cref{AntiDivFourier} implies that the operator $\mathcal{R}|\nabla|T_M$ corresponds to the multiplier
\begin{equation*}
\frac{m_1(k)}{|k|}M(k)|k|,
\end{equation*}
where $m_1$ is a suitable linear combination of iterated Riesz multipliers (see \cref{IterRiesz}). Now,
\begin{multline*}
  \|\mathcal{R}|\nabla|T_M (ab)\|_{L^1}
    \lesssim \log^{3-\beta}(N)\|a_1b\|_{\LlL^1}+\|a_2b\|_{\LlL^1}\le\\
    \leq\log^{3-\beta}(N)\|a_1\|_{L^{\infty}}\|b\|_{\LlL^1}+\|a_2\|_{L^{\infty}}\|b\|_{\LlL^1}\le\\
     \leq\bigl(\log^{3-\beta}(N) \|a\|_{H^2}+\tfrac{1}{N}\|a\|_{H^3}\bigr)\|b\|_{\LlL^1},
\end{multline*}
where the first inequality is due to  \cref{IterRiesz} and \cref{ProjEstimateDiv-1log}, the second one follows from \cref{PropLuxemburg} and the third one can be justified by Sobolev embeddings.

The second inequality, \cref{MultiplierAndPeriodicity2}, follows from a slight modification of the previous proof. In particular, recall that $\mathcal{R}\proj_{\neq 0}$ can be seen as corresponding to the Fourier multiplier given by \cref{AntiDivFourier}, which is equal to
\begin{equation*}
    \frac{m_1(k)}{|k|_1}\frac{\sum_{j=1}^3 |k_j|}{|k|}=\frac{m_1(k)}{|k|_1}\frac{\sum_{j=1}^3 \sign(k_j)k_j}{|k|}=\frac{1}{|k|_1}\sum_j m_{2,j}(k) \sign(k_j),
\end{equation*}
where $m_1$ and $m_{2,j}$ are linear combinations of iterated Riesz transforms.
Therefore, $\mathcal{R}\proj_{\neq 0}$ is a finite linear combination of operators given by the composition of $T_m$ with $m(k)=\mathbf{1}_{\{k\neq 0\}}(k)\frac{1}{|k|_1}$, iterated Riesz transforms and directional Hilbert transforms. Then, the result is easily achieved by using \cref{Div-1}, \cref{IterRiesz} and \cref{DirHilbert}.
\end{proof}

\subsubsection{Corrector error}

 Given its formulation, it produces two addends of $R_{q+1}$ whom $\LinfL$ norm can be estimated thanks to \cref{MainEstimate}. Indeed,
\begin{align*}
  \lefteqn{\|w^{(p)} \otimes (w^{(c)}+w^{(t)})+(w^{(c)}+w^{(t)})\otimes w\|_{\LinfL}\le}\qquad&\\
    &\leq\|w+w^{(p)}\|_{\LinfLSec}+\|w^{(c)}+w^{(t)}\|_{\LinfLSec}\\
    &\le C_a(1) \Bigl(\frac{1}{\lambda}+\sigma r + \frac{r^{\frac{3}{2}}}{\mu}\Bigr)\Bigl(\frac{1}{\lambda}+\sigma r + \frac{r^{\frac{3}{2}}}{\mu}+\delta_{q+1}^{\frac{1}{2}}+(\lambda \sigma)^{-\frac{1}{2}}\Bigr).
\end{align*}

\subsubsection{Linear error}

 Since $\mathcal{R}$ is the right inverse to the divergence operator (\cref{AntiDiv}), the terms $v_q \otimes w+w\otimes v_q$ and $\mathcal{R} (\Diss w+\partial_t(w^{(c)}+w^{(p)}))$ are part of $R_{q+1}$. In addition, \cref{MainEstimate} implies that
 \begin{equation*}
   \|v_q\otimes w\|_{\LinfL}\le \|v_q\|_{L^{\infty}_tL^{\infty}_x} \|w\|_{L^{\infty}_t L^s_x} \lesssim C_a(0) \|v_q\|_{L^{\infty}_tL^{\infty}_x} r^{\frac{3}{2}-\frac{3}{s}},
 \end{equation*}
 for $s\in (1,2)$.

The remaining terms are estimated through \cref{MultiplierAndPeriodicity}.
\begin{prop}\label{linearEst}
    The following estimates hold:
    \begin{enumerate}
        \item $\|\mathcal{R}(\partial_t (w^{(c)}+w^{(p)}))\|_{\LinfL}\lesssim C_a(1) \sigma \mu r^{-\frac{1}{2}}\log(\lambda)$,
        \item $\displaystyle\|\mathcal{R}\Diss(w^{(p)}+w^{(c)})\|_{\LinfL}\lesssim C_a(2)\Bigl(\frac\lambda{r}\Bigr)^{\frac{3}{2}}\log^{4-\beta}(\lambda)$;
        \item $\displaystyle\|\mathcal{R}\Diss(w^{(t)})\|_{\LinfL}\lesssim C_a(2)\frac{(\lambda \sigma r)^{\frac{3}{2}}}{\mu}\log(\lambda)$.
    \end{enumerate}
\end{prop}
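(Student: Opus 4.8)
\textbf{Proof plan for \cref{linearEst}.}
The three estimates all follow by expanding the perturbation terms via \cref{eq:w}, using the frequency localization and periodicity of the building blocks, and then applying \cref{MultiplierAndPeriodicity} together with the pointwise bounds of \cref{MainPropBB} and \cref{Lemma1}. Throughout I would absorb all $C^N_{t,x}$ norms of the amplitudes $a_\xi$ into the symbol $C_a(N)$, as in the convention fixed before \cref{MainEstimate}, so that the remaining factors are explicit powers of $\lambda,\sigma,r,\mu$ together with logarithmic corrections produced by the Fourier multiplier estimates.

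For item (1), I write $w^{(p)}+w^{(c)}=\sum_{\xi}\bigl(a_\xi B_\xi+\tfrac1\lambda\nabla(a_\xi\et)\times B_\xi\bigr)e^{i\lambda x\cdot\xi}$, so $\partial_t(w^{(p)}+w^{(c)})$ hits either an amplitude (producing a $\psi'$-type factor, hence a $\delta_{q+1}^{-1}$ swallowed by $C_a(1)$) or the phase $\et$ (producing via \cref{Point2} a factor $\mu$ together with $(\xi\cdot\nabla)\et$). The key point is that $\mathcal{R}\Div$ is order zero, so after using \cref{AntiDiv} one is left with $\mathcal{R}$ applied to something already of the right form; since the whole expression is frequency-supported at $|k|_1\approx\lambda$, applying $\mathcal{R}\proj_{\neq0}$ gains essentially a $\lambda^{-1}$ — but here the cleaner route is to estimate directly in $\LinfLSec$ and pay a single power, or to note $\|\mathcal{R}(\partial_t w^{(c)})\|$ is lower order. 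The dominant contribution is $\tfrac1\mu\cdot\mu$ times $\|\partial_t\et^2\xi\|$-type terms; combining $\|\nabla^0\partial_t\et\|_{\LlL^\alpha}\lesssim\lambda\sigma r\mu\, r^{-3/2}\log^\alpha\lambda$ from \cref{Point6} with the extra $\mu^{-1}$ from the $w^{(t)}$-free part and the amplitude bounds yields the stated $C_a(1)\sigma\mu r^{-1/2}\log(\lambda)$; the $\log(\lambda)$ is exactly the $\alpha=1$ Orlicz correction.

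For items (2) and (3), the operator $\Diss=\nu(-\Delta)^{5/4,\beta}$ has symbol $|k|^{5/2}\log^{-\beta}(10+|k|_1)$, so $\mathcal{R}\Diss$ factors as $\bigl(\mathcal{R}|\nabla|\bigr)\circ\bigl(|\nabla|^{3/2}T_M\bigr)$ up to bounded Riesz-type multipliers, and $|\nabla|^{3/2}$ acting on a function frequency-localized at scale $\lambda$ (resp. $\lambda\sigma r$ for $w^{(t)}$) contributes $\lambda^{3/2}$ (resp. $(\lambda\sigma r)^{3/2}$). Writing $w^{(p)}+w^{(c)}=\sum_\xi(\text{amplitude})\cdot b_\xi$ with $b_\xi$ the $(\mathbf{T}/(\lambda\sigma))^3$-periodic, $\proj_{\ge N}$-localized factor (with $N\approx\lambda\sigma$, say $N\in 2\N^*$ as required), \cref{MultiplierAndPeriodicity1} gives the decisive $\log^{3-\beta}(\lambda)$ gain from the multiplier $|\nabla|^{-1}T_M|\nabla|$ — one more $\log\lambda$ than needed is absorbed in passing from $\LlL^1$ to the $\log^\alpha\lambda$ factor of \cref{Point6}, producing the net $\log^{4-\beta}(\lambda)$; the remaining factor $\lambda^{3/2}\cdot\|\et\|_{\LlL^1}\approx\lambda^{3/2}r^{-3/2}$ gives $(\lambda/r)^{3/2}$. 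For (3) one uses instead \cref{MultiplierAndPeriodicity2} applied to $w^{(t)}=\tfrac1\mu\sum_\xi P_H\proj_{\neq0}(a_\xi^2\et^2\xi)$: the $\tfrac1N\log^3 N$ gain with $N\approx\lambda\sigma r$, combined with $\lambda^{3/2}$ from (actually here $\mathcal{R}\Diss$ on a scale-$\lambda\sigma r$ function, giving $(\lambda\sigma r)^{3/2}$), the $\mu^{-1}$ prefactor, $\|\et^2\|_{\LlL^2}\lesssim r^{-3/2}\cdot r^{-3/2}\cdot(\ldots)$ handled via \cref{Point6}, and the amplitude bounds, delivers $C_a(2)(\lambda\sigma r)^{3/2}\mu^{-1}\log(\lambda)$.

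\textbf{Main obstacle.} The delicate point is bookkeeping the Orlicz-space losses: each application of \cref{MultiplierAndPeriodicity} or \cref{Point6}/\cref{Point7} trades an $L^1$ estimate for an $\LlL^\alpha$ estimate at the cost of a $\log^\alpha\lambda$, and simultaneously the anti-divergence-with-dissipation multiplier $\mathcal{R}|\nabla|T_M$ \emph{gains} $\log^{3-\beta}N$; one must track these against each other carefully so that the final exponent of $\log\lambda$ (here $4-\beta$ in item (2), which is negative precisely when $\beta>4$, and ultimately the iteration forces $\beta>\tfrac{29}{2}$ once all such terms are balanced) comes out correctly. The second subtlety is ensuring that $w^{(p)}+w^{(c)}$ and $w^{(t)}$ are genuinely frequency-supported at the claimed scales so that $N\in2\N^*$ can be taken of that order in \cref{MultiplierAndPeriodicity}; this is guaranteed by \cref{Point1} and the definition \cref{eq:w}, but the constants $\tfrac14,3,4\lambda$ appearing in the projector ranges of \cref{MainPropBB} must be reconciled with the hypothesis $\proj_{\ge N}b=b$ of \cref{MultiplierAndPeriodicity}, which forces writing $b_\xi$ as the purely oscillatory factor and keeping $a_\xi$ (with its low frequencies) as the multiplicand $a$.
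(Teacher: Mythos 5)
Your plan names the right lemmas but misses the two structural devices that make the stated powers of $\log\lambda$ come out, so as written the argument does not close. For item (1) the paper's key step is the identity $w^{(p)}+w^{(c)}=\frac1\lambda\nabla\times w^{(p)}$, which turns $\mathcal{R}\,\partial_t(w^{(p)}+w^{(c)})$ into $\frac1\lambda(\mathcal{R}\,\nabla\times)\partial_t w^{(p)}$; here $\mathcal{R}\,\nabla\times$ is a \emph{zero-order} multiplier (a combination of iterated Riesz transforms, by \cref{AntiDivFourier}), hence bounded from $\LlL^1$ to $L^1$ by \cref{IterRiesz} with no logarithmic loss beyond the single $\log(\lambda)$ coming from \cref{Point6}. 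Your substitutes do not reproduce this: gaining $\lambda^{-1}$ from ``$\mathcal{R}\proj_{\neq0}$ on a frequency-$\lambda$ function'' means invoking \cref{MultiplierAndPeriodicity2}, which costs an extra $\log^3(\lambda)$ plus a $\log^2(\lambda)$ from the $\LlL^2$ norm; estimating in $\LinfLSec$ gives no $L^1$-based bound at all; and the arithmetic ``$\lambda\sigma r\mu\,r^{-3/2}\log\lambda$ times an extra $\mu^{-1}$'' yields $\lambda\sigma r^{-1/2}\log\lambda$, not $\sigma\mu r^{-1/2}\log\lambda$. Since the entire point of the paper is to count logarithms (each extra $\log$ raises the admissible $\beta$), these are not cosmetic losses.

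For items (2)--(3) you assert that $|\nabla|^{3/2}$ ``contributes $\lambda^{3/2}$'' on a frequency-localized function, but in $L^1$- and $\LlL$-type spaces this requires an argument: the paper interpolates, $\|\,|\nabla|^{3/2}f\|_{\LinfL}\le\|\,|\nabla|f\|_{\LinfL}^{1/2}\|\,|\nabla|^2f\|_{\LinfL}^{1/2}$, and estimates the integer derivatives via \cref{MultiplierAndPeriodicity1} for item (2) and via the $\LlL^1\to L^1$ boundedness of the zero-order operator $\mathcal{R}|\nabla|P_HT_M$ (from \cref{IterRiesz} and \cref{Div-1Log}) for item (3). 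Two further concrete problems. In (2), the hypothesis $\proj_{\geq N}b=b$ of \cref{MultiplierAndPeriodicity1} must be used with $N\approx\lambda$, the frequency support of $\BW$ from \cref{Point1}, not with the periodicity scale $N\approx\lambda\sigma$ that you propose: with the final choice of parameters $\lambda\sigma$ is only polylogarithmic in $\lambda$, so $\log^{3-\beta}(\lambda\sigma)$ is vastly larger than $\log^{3-\beta}(\lambda)$ and the estimate collapses (you then write $\log^{3-\beta}(\lambda)$ anyway, which is inconsistent with your choice of $N$). In (3), your bookkeeping $\frac1N\log^3(N)\cdot(\lambda\sigma r)^{3/2}\cdot\mu^{-1}\cdot\|\et^2\|_{\LlL^2}$ with $N\approx\lambda\sigma r$ produces $(\lambda\sigma r)^{1/2}\mu^{-1}$ up to $\log^5$-type factors, not $(\lambda\sigma r)^{3/2}\mu^{-1}\log(\lambda)$: \cref{MultiplierAndPeriodicity2} is the wrong tool there, and the correct route spends the $|\nabla|$ hidden in $\mathcal{R}|\nabla|T_M$ to form a bounded operator rather than to gain a factor $N^{-1}$.
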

\begin{proof}
  To prove the first inequality, notice that $w^{(p)}+w^{(c)}=\frac1\lambda\nabla \times w^{(p)}$, so
  \begin{multline*}
    \|\mathcal{R}(\partial_t (w^{(c)}+w^{(p)}))\|_{\LinfL}=\frac{1}{\lambda}\|\mathcal{R}(\nabla \times (\partial_t w^{(p)}))\|_{\LinfL}\lesssim\\
      \lesssim\frac{1}{\lambda}\|\partial_tw^{(p)}\|_{L^{\infty}_t\LlL^1_x}\le \frac{1}{\lambda}(C_a(1)\|\et\|_{L^{\infty}_t\LlL^1_x} +C_a(0) \|\partial_t \et\|_{L^{\infty}_t\LlL^1_x}),
  \end{multline*}
where the first inequality is due to \cref{AntiDivFourier} and \cref{IterRiesz} and the second one follows from \cref{PropLuxemburg}.
Now, \cref{Point6} implies the required inequality.

We turn to the proof of the second inequality. Recall that $T_M$ is the operator associated with the multiplier \begin{equation*}
    M(k)=\mathbf{1}_{\{k\neq 0\}}(k)\log^{-\beta}(|k|_1+10).
\end{equation*} Then by a classical interpolation argument, we have
\begin{multline*}
  \|\mathcal{R}\Diss(w^{(p)}+w^{(c)})\|_{\LinfL}
    =\frac{1}{\lambda}\||\nabla|^{\frac{3}{2}}(\mathcal{R}|\nabla|T_M (\nabla \times w^{(p)}))\|_{\LinfL}\leq\\
    \le \frac{1}{\lambda}\|\mathcal{R}|\nabla|T_M \nabla(\nabla \times w^{(p)}))\|_{\LinfL}^{\frac{1}{2}}
      \|\mathcal{R}|\nabla|T_M \nabla^2(\nabla \times w^{(p)}))\|_{\LinfL}^{\frac{1}{2}}.
\end{multline*}
Now we estimate the first factor $\|\mathcal{R}|\nabla|T_M\nabla(\nabla \times w^{(p)}))\|_{\LinfL}$, the second one can be estimated by a similar procedure.

First of all,
\begin{multline*}
  \nabla(\nabla \times (a_{\xi}\BW))=\nabla(a_{\xi}(\nabla \times \BW)+\nabla a_{\xi} \times \BW)=\\
  = (\nabla a_{\xi})(\nabla \times \BW)+a_{\xi}\nabla(\nabla \times \BW)+\nabla^2a_{\xi}\times \BW+\nabla a_{\xi}\times \nabla \BW.
\end{multline*}
Notice that \cref{Point1} implies that all the terms are composed by a suitable derivative of $a_{\xi}$ and a $(\frac{\mathbf{T}}{\lambda /5})^3$-periodic function. Therefore,
\begin{align*}
  \|\mathcal{R}|\nabla|T_M \nabla(\nabla \times w^{(p)}))\|_{\LinfL}
    &\lesssim \log^{3-\beta}(\lambda)C_a(5)\|\nabla ^2\BW\|_{L^{\infty}_t\LlL^1_x}\\
    &\lesssim C_a(5)\lambda^2r^{-\frac{3}{2}}\log^{4-\beta}(\lambda),
\end{align*}
where the first inequality is due to \cref{MultiplierAndPeriodicity1}, while \cref{Point7} implies the second one.

Finally, we prove the third inequality. Following the same strategy used above, we have that
\begin{align*}
  \|\mathcal{R}\Diss(w^{(t)})\|_{\LinfL}
    &=\||\nabla|^{\frac{3}{2}}(\mathcal{R}|\nabla|T_M (w^{(t)}))\|_{\LinfL}\\
    &\le \|\mathcal{R}|\nabla|T_M \nabla(w^{(t)}))\|_{\LinfL}^{\frac{1}{2}}
      \|\mathcal{R}|\nabla|T_M \nabla^2(w^{(t)}))\|_{\LinfL}^{\frac{1}{2}}.
\end{align*}
Let us show how we can estimate the first factor. Recall that the Leray-Helmholtz projector $P_H$ corresponds to the multiplier $I-\frac{k\otimes k}{|k|^2}$, therefore $\mathcal{R}|\nabla|P_H$ is associated with a linear combination of iterated Riesz transforms. So, by \cref{IterRiesz} and \cref{Div-1Log}, we have that $\mathcal{R}|\nabla|P_HT_M$ maps $\LlL$ into $L^1$. Hence, 
\begin{align*}
  \|\mathcal{R}|\nabla|T_M \nabla(w^{(t)}))\|_{\LinfL}
    &\lesssim\frac{1}{\mu}\|\mathcal{R}|\nabla| P_HT_M(\nabla(a_{\xi}^2)\et^2+a^2_{\xi}\nabla(\et^2))\|_{\LinfL}\\
    &\lesssim\frac{1}{\mu}\|\nabla(a_{\xi}^2)\et^2+a^2_{\xi}\nabla(\et^2)\|_{L^{\infty}_t\LlL^1_x}\\
    &\lesssim\frac{1}{\mu}C_a(1)\bigl(\|\et^2\|_{L^{\infty}_t\LlL^1_x}+\|\nabla\et^2\|_{L^{\infty}_t\LlL^1_x}\bigr)\\
    &\lesssim\frac{1}{\mu}C_a(1)\lambda \sigma r \log(\lambda),
\end{align*}
where the third inequality follows from \cref{PropLuxemburg} and the last one follows from \cref{Point6}.
\end{proof}

\subsubsection{Oscillation error}

Finally, the oscillation error is treated as in \cite{BV19}. So,
\begin{align*}
  \lefteqn{\Div(R_q-w^{(p)}\otimes w^{(p)})=}\qquad&\\
    &=\Div\bigl(R_q-\sum_{\xi,\xi'}a_{\xi}a_{\xi'}\fint_{\T}\BW\otimes\BWpr dx
      +\sum_{\xi,\xi'}a_{\xi}a_{\xi'}\proj_{\neq 0}\BW\otimes \BWpr\bigr)\\
    &=\Div\bigl(R_q-\sum_{\xi}a_{\xi}^2\fint_{\T}\BW\otimes \mathbf{W}_{-\xi}dx
      +\sum_{\xi,\xi'}a_{\xi}a_{\xi'}\proj_{\neq 0}\BW\otimes \BWpr\bigr)\\
    &=\Div\bigl(\rho I_3+\sum_{\xi,\xi'}a_{\xi}a_{\xi'}\proj_{\neq 0}\BW\otimes \BWpr\bigr),
\end{align*}
where the second equality follows from \cref{Point3} and the third one from \cref{RecoverRq}. Since $\Div(\rho I_3)=\nabla \rho$, then $\rho$ goes into $ p_{q+1}$ as well as $ p_q$.

Now, we can treat $\Div(\sum_{\xi,\xi'}a_{\xi}a_{\xi'}P_{\neq 0}\BW\otimes \BWpr)=:\sum _{\xi,\xi'}E_{\xi,\xi'}$ by taking into account each term $E_{\xi,\xi'}+E_{\xi',\xi}$. In particular, since $E_{\xi,\xi'}+E_{\xi',\xi}$ has zero mean, then 
\begin{align*}
  E_{\xi,\xi'}+E_{\xi',\xi}
    &=\proj_{\neq 0}\bigl(\nabla(a_{\xi}a_{\xi'})\cdot\proj_{\geq\lambda\sigma/2}(\BW\otimes\BWpr+\BWpr\otimes\BW)\bigr)\\
    &\quad + \proj_{\neq 0}\big(a_{\xi}a_{\xi'} \Div(\BW\otimes \BWpr +\BWpr \otimes \BW)\big)\\
    &=:E_{\xi,\xi',1}+E_{\xi,\xi',2},
\end{align*}
The term $\mathcal{R}E_{\xi,\xi',1}$ goes into $R_{q+1}$, \cref{MultiplierAndPeriodicity2} and \cref{PropLuxemburg} imply that 
\begin{multline*}
  \|\mathcal{R}E_{\xi,\xi',1}\|_{\LinfL}
    \lesssim \frac{\log^3(\lambda \sigma)}{\lambda\sigma }C_a(5)\|\BW\otimes \BWpr\|_{L^{\infty}_t\LlL^2_x}\lesssim\\
    \lesssim \frac{\log^3(\lambda \sigma)}{\lambda \sigma }C_a(5)\|\et \eta_{\xi'}\|_{L^{\infty}_t\LlL^2_x}
    \lesssim C_a(5) \frac{\log^3(\lambda \sigma)}{\lambda \sigma }  \log^2(\lambda),
\end{multline*}
where $\|\et \eta_{\xi'}\|_{L^{\infty}_t\LlL^2_x}$ can be estimated arguing as in \cref{Point6}.

Instead, we need to distinguish two cases in order to deal with $E_{\xi,\xi',2}$.

\underline{\emph{First case: $\xi\neq -\xi'$}}.
If $\xi\neq -\xi'$, then $\proj_{>\lambda/10}(\BW\otimes \BWpr)=\BW\otimes \BWpr$. Therefore,
    \begin{equation*}
      \begin{split}
        \lefteqn{a_{\xi}a_{\xi'} \Div((\BW\otimes \BWpr +\BWpr \otimes \BW)=}\qquad&\\
          &= a_{\xi}a_{\xi'} \Div( \proj_{\geq\lambda/10}\big(\et \eta_{\xi'}(W_{\xi}\otimes W_{\xi'}+W_{\xi'}\otimes W_{\xi})\big)\\
          &= a_{\xi}a_{\xi'} \proj_{\geq\lambda/10}\big(\nabla(\et \eta_{\xi'})\cdot (W_{\xi}\otimes W_{\xi'} + W_{\xi'}\otimes W_{\xi})\big) + {}\\
          &\quad +a_{\xi}a_{\xi'}\proj_{\geq\lambda/10}\big(\et \eta_{\xi'}\nabla(W_{\xi}\cdot W_{\xi'})\big)\\
          &=a_{\xi}a_{\xi'} \proj_{\geq\lambda/10}\big(\nabla(\et \eta_{\xi'})\cdot (W_{\xi}\otimes W_{\xi'}+W_{\xi'}\otimes W_{\xi})\big) +\nabla(a_{\xi}a_{\xi'}\BW\cdot\BWpr) + {}\\
          &\quad -\nabla(a_{\xi}a_{\xi'})\proj_{\geq\lambda/10}(\BW\cdot \BWpr)-a_{\xi}a_{\xi'}\proj_{\geq\lambda/10}\big(W_{\xi}\cdot W_{\xi'}\nabla (\et \eta_{\xi'})\big),
      \end{split}
    \end{equation*}
where the second term produces a pressure, the third one can be estimated as $E_{\xi,\xi',1}$, the first and the fourth produce two new terms of $R_{q+1}$ whose $\LinfL$ norms can be estimated in the same way. Namely,
\begin{multline*}
  \|\mathcal{R}\proj_{\neq 0}\Big(   a_{\xi}a_{\xi'}\proj_{\geq\lambda/10}\big(W_{\xi}\cdot W_{\xi'}\nabla (\et \eta_{\xi'})\big)\Big)\|_{\LinfL}\lesssim\\
  \lesssim C_a(5)\frac{\log^3(\lambda)}{\lambda}\|\nabla (\et \eta_{\xi'})\|_{L^{\infty}_t\LlL^2_x} \lesssim C_a(5)\frac{\log^3(\lambda)}{\lambda}\|\nabla (\et) \eta_{\xi'}\|_{L^{\infty}_t\LlL^2_x}\lesssim\\
  \lesssim C_a(5)\frac{\log^3(\lambda)}{\lambda} \log^2(\lambda) \lambda \sigma r,
\end{multline*}
where we have used \cref{MultiplierAndPeriodicity2}.

\underline{\emph{Second case: $\xi= -\xi'$}}.
If $\xi'=-\xi$, then 
$E_{\xi,-\xi,2}=\proj_{\neq 0}\big(a_{\xi}^2(\nabla\proj_{\geq \lambda\sigma/2}( \et^2)-\frac{\xi}{\mu}\partial_t \et^2)\big)$. Now, let us recall that $\partial_t w^{(t)}$ is part of the oscillation term, i.e. we need to deal with $\sum_{\xi \in \Lambda^+}[\proj_{\neq 0}\big(a_{\xi}^2(\nabla\proj_{\geq \lambda\sigma/2}( \et^2)-\frac{\xi}{\mu}\partial_t \et^2)\big)+\frac{1}{\mu}\partial_t P_H\proj_{\neq 0}(a_{\xi}^2 \et^2 \xi)]$.\\
Now,
\begin{equation*}
  \proj_{\neq 0}\big(a_{\xi}^2(\nabla\proj_{\geq \lambda\sigma/2}( \et^2))\big)=\nabla(a_{\xi}^2  \proj_{\geq \lambda\sigma/2}( \et^2)) - \proj_{\neq 0}(\nabla a_{\xi}^2\proj_{\geq \lambda\sigma/2}( \et^2)),
\end{equation*}
where the first term goes into the pressure $p_{q+1}$, while the second produces a term of $R_{q+1}$ with $\LinfL$ estimated using \cref{MultiplierAndPeriodicity2}. Indeed, 
\begin{equation*}
    \|\mathcal{R}\proj_{\neq 0}(\nabla a_{\xi}^2\proj_{\geq \lambda\sigma/2}( \et^2))\|_{\LinfL}\lesssim C_a(5)\frac{\log^3(\lambda \sigma)}{\lambda \sigma }\log^2(\lambda).
\end{equation*}
Instead,
\begin{equation*}
    -\proj_{\neq 0}\Bigl(a_{\xi}^2\partial_t\et^2\frac{\xi}{\mu}\Bigr)+P_H\proj_{\neq 0}\Bigl(a_{\xi}^2 \et^2 \frac{\xi}{\mu}\Bigr)=
    -(I-P_H)\Bigl(\proj_{\neq 0}\partial_t(a_{\xi}^2\et^2) \frac{\xi}{\mu}\Bigr)+\proj_{\neq 0}\Bigl(\frac{\xi}{\mu}\partial_t(a_{\xi}^2)\et^2\Bigr),
\end{equation*}
where the first term goes into the pressure thanks to $I-P_H=\nabla \Delta^{-1}\Div$, while the second one can be seen as part of $R_{q+1}$, and
\begin{equation*}
    \bigl\|\mathcal{R}\proj_{\neq 0}\bigl(\tfrac{\xi}{\mu}\partial_t(a_{\xi}^2)\et^2\bigr)\bigr\|_{\LinfL}\lesssim C_a(5)\frac1{\mu}\log^2(\lambda).
\end{equation*}

\subsubsection{Conclusion of the proof}
Summing up all the contributions to $R_{q+1}$, we end up with
\begin{equation}\label{Rq+1Est}
\begin{aligned}
  |R_{q+1}|_{\LinfL}
    \lesssim & C_a(1) \big(\frac{1}{\lambda}+\sigma r + \frac{r^{\frac{3}{2}}}{\mu}\big)\big(\frac{1}{\lambda}+\sigma r + \frac{r^{\frac{3}{2}}}{\mu}+\delta_{q+1}^{\frac{1}{2}}+(\lambda \sigma)^{-\frac{1}{2}}\big)\\
    & + C_a(0)r^{\frac{3}{2}-\frac{3}{s}}+C_a(1)\sigma \mu r^{-\frac{1}{2}}\log(\lambda)+C_a(2)\big(\frac{\lambda}{r}\big)^{\frac{3}{2}}\log^{4-\beta}(\lambda)\\
    &+C_a(2)\frac{(\lambda \sigma r)^{\frac{3}{2}}}{\mu}\log(\lambda)
      +C_a(5)\frac{\log^3(\lambda \sigma)}{\lambda \sigma}\log^2(\lambda)\\
    &+C_a(5)\log^5(\lambda)\sigma r+C_a(5)\frac{\log^2(\lambda)}{\mu}.
\end{aligned}
\end{equation}
Since the values of $C_a(N)$ do not depend on the parameter $\lambda$ (see \cref{Lemma1}), then we can choose the parameters $s$, $\sigma$, $\mu$ and $r$ such that \cref{AssumPams} hold and $\|R_{q+1}\|_{\LinfL}$ is smaller than $\delta_{q+2}$ as long as $\lambda$ is large enough.
Therefore, consider
\begin{equation*}
    s=\frac{3}{2},\quad \sigma=\frac1\lambda\log^2(\lambda) \log^4(\log(\lambda)),\quad r=\frac{\lambda}{\log^{y}(\lambda)},\quad \mu=\lambda^{\frac{3}{2}}\log^{z}(\lambda),
\end{equation*}
where $y>2$ and $z>-\frac{3}{2}y$ in order to satisfy \cref{AssumPams}.
In particular, by this choice, the RHS of \cref{Rq+1Est} tends to zero as long as $\lambda$ diverges to $\infty$ if the following inequalities hold
\begin{gather*}
    z>-\frac{3}{2}y,\qquad z+\frac{y}{2}<-3,\qquad y>7,\\
    \frac{3}{2}y+4-\beta <0,\qquad \frac{3}{2}y+z>4.
\end{gather*}
It is possible to find a pair $(y,z)$ that satisfies the previous inequality if and only if $\beta >\frac{29}{2}$.
In order to conclude the proof of \cref{IterLemma}, we need to show that \cref{IterLemma1} and \cref{IterLemma2} are fully satisfied by our construction and our choice of the parameters. In particular, the control on the temporal support follows directly from \cref{TempSupp}, while the estimates on $\|v_{q+1}-v_q\|_{\LinfLSec}$ and $\||\nabla|^{\frac{3}{2}}T_m(v_{q+1}-v_q)\|_{\LinfL}$ are due to \cref{MainEstimate} and \cref{linearEst}.

This concludes the proof of the \emph{Iteration Lemma} (\cref{IterLemma}).

\bibliographystyle{amsalpha}
\bibliography{refs}
\end{document}